%%
%% Copyright 2007, 2008, 2009 Elsevier Ltd
%%
%% This file is part of the 'Elsarticle Bundle'.
%% ---------------------------------------------
%%
%% It may be distributed under the conditions of the LaTeX Project Public
%% License, either version 1.2 of this license or (at your option) any
%% later version.  The latest version of this license is in
%%    http://www.latex-project.org/lppl.txt
%% and version 1.2 or later is part of all distributions of LaTeX
%% version 1999/12/01 or later.
%%
%% The list of all files belonging to the 'Elsarticle Bundle' is
%% given in the file `manifest.txt'.
%%

%% Template article for Elsevier's document class `elsarticle'
%% with numbered style bibliographic references
%% SP 2008/03/01
%%
%%
%%
%% $Id: elsarticle-template-num.tex 4 2009-10-24 08:22:58Z rishi $
%%
%%
\documentclass[preprint,12pt]{elsarticle}
\usepackage{latexsym}
\usepackage{amssymb}
\usepackage{amsmath}
\usepackage{amsthm}
\usepackage{verbatim}

\numberwithin{equation}{section}

\newtheorem{theorem}{Theorem}[section]
\newtheorem{lemma}{Lemma}[section]
\newtheorem{proposition}{Proposition}[section]
\newtheorem{remark}{Remark}[section]
\newtheorem{definition}{Definition}[section]

\journal{XXXX}

\begin{document}

\begin{frontmatter}

%% Title, authors and addresses

%% use the tnoteref command within \title for footnotes;
%% use the tnotetext command for the associated footnote;
%% use the fnref command within \author or \address for footnotes;
%% use the fntext command for the associated footnote;
%% use the corref command within \author for corresponding author footnotes;
%% use the cortext command for the associated footnote;
%% use the ead command for the email address,
%% and the form \ead[url] for the home page:
%%
%% \title{Title\tnoteref{label1}}
%% \tnotetext[label1]{}
%% \author{Name\corref{cor1}\fnref{label2}}
%% \ead{email address}
%% \ead[url]{home page}
%% \fntext[label2]{}
%% \cortext[cor1]{}
%% \address{Address\fnref{label3}}
%% \fntext[label3]{}

\title{Existence of multiple periodic solutions for a semilinear wave equation in an $n$-dimensional ball
\tnoteref{ack}
}
\tnotetext[ack]{This work is partially supported by NSFC Grants (nos. 11322105 and 11671071).}

%% use optional labels to link authors explicitly to addresses:
%% \author[label1,label2]{<author name>}
%% \address[label1]{<address>}
%% \address[label2]{<address>}

\author{Hui Wei}
\ead{weihui01@163.com}
\author{Shuguan Ji\corref{cor}}
\ead{jishuguan@hotmail.com}
\address{School of Mathematics and Statistics and Center for Mathematics and Interdisciplinary Sciences, Northeast Normal University, Changchun 130024, P.R. China}
\cortext[cor]{Corresponding author.}

\begin{abstract}
%% Text of abstract
This paper is devoted to the study of periodic solutions for a radially symmetric semilinear wave equation in an $n$-dimensional ball.
By combining the variational methods and saddle point reduction technique, we prove there exist at least three
periodic solutions for arbitrary space dimension $n$. The structure of the spectrum of the linearized problem plays an essential role in the proof, and
the construction of a suitable working space is devised to overcome the restriction of space dimension.
\end{abstract}

\begin{keyword}
%% keywords here, in the form: keyword \sep keyword
Existence,  periodic solutions, wave equation
%% MSC codes here, in the form: \MSC code \sep code
%% or \MSC[2008] code \sep code (2000 is the default)
\end{keyword}

\end{frontmatter}

%%
%% Start line numbering here if you want
%%
% \linenumbers

%% main text
\section{Introduction}

In this paper, we are concerned with the existence of periodic solutions for a radially symmetric semilinear wave equation with periodic-Dirichlet conditions
\begin{eqnarray}
\left\{
\begin{array}{lll}
 u_{tt} - \Delta u = \mu u + f(t, x, u), & t\in \mathbb{R}, \ x \in B_R^n,\\
 u(t,x) = 0, & t\in \mathbb{R}, \ x \in \partial B_R^n, \\
 u(t+T,x) = u(t,x),  & t\in \mathbb{R}, \ x \in B_R^n,
\end{array}
\right.
\label{eqa:1.1}
\end{eqnarray}
where $x=(x_1,\cdots, x_n) \in \mathbb{R}^n$, $\mu>0$  is a constant, $B_R^n = \{ x\in \mathbb{R}^n : \|x\| < R\}$, and  $\partial B_R^n = \{ x\in \mathbb{R}^n : \|x\| = R\}$.

The wave equation is a simplified mathematical model to account for the wave phenomena, such as fluid dynamic, electromagnetic, membrane vibration, etc. In the past few years, many authors (see \cite{Chang.(1981), Chang.(1982), Ding.(1998), Guo.(2004), Ji.(2011), Rabinowitz.(1978), Schechter.(2010), Schechter.(1998), Tanaka.(2006), Wang.(2001)}) payed much attention to the periodic solutions of the wave equation when the space dimension $n=1$. There are also many papers (see \cite{Ben-Naoum.(1995), Ben-Naoum.(1993), Chen.(2017), Chen.(2016), Chen.(2014), Schechter.(1998)}) consider the wave equation in a ball with radius $R$ and time period $T$, but a very interest thing is that the solvability of wave equation in an $n$-dimensional ball with radius $R$ depends on the arithmetical properties of $R$ and $T$. It is well known that, for the one-dimensional case,  if $T=2\pi$ and radius $R= \pi/2$, the structure of the spectral set of the wave operator is made of the eigenvalues $\lambda_{jk}=(2j-1)^2 - k^2$ ($j\in \mathbb{Z_+}=\{1, 2, \cdots\}$, $k\in \mathbb{Z}$). However, for the higher dimensional case, even if $R/T$ are certain rational numbers, the structure of the spectral set of the wave operator becomes more complicated, which makes it more difficult to investigate the periodic solutions of problem \eqref{eqa:1.1}. Ben-Naoum and Mawhin \cite{Ben-Naoum.(1993)} studied the wave equation with a general nonlinear term and obtained at least one $2\pi$-periodic solution in an $n$-dimensional ball with radius $R=\pi/2$, when $n=3$ or $n$ is even. The results are essentially based on the asymptotically behavior of the spectrum of the wave operator.

As regards the multiplicity problem, in \cite{Chen.(2014)}, Chen and Zhang considered the wave equation $ u_{tt} - \Delta u= \mu u + |u|^{p-1}u $ in a ball in $\mathbb{R}^n$
and obtained infinitely many weak solutions for the case that $n-3$ is an integer multiple of $(4,a)$ and $8R/T = a/b$, where $a$, $b$ are relative prime positive integers and $(4,a)$ denotes the greatest common divisor of $4$ and $a$.  Later, they \cite{Chen.(2016)} also dealt with the wave equation $ u_{tt} - \Delta u= \mu u + a(t,x)|u|^{p-1}u $ in a ball with $R= \pi/2$ and obtained infinitely many $2\pi$-periodic solutions for the cases that $n$ is an even integer or $n>3$ is odd. Recently, they \cite{Chen.(2017)} also investigated the wave equation $u_{tt} - \Delta u= g(t,x,u)$ and proved that there exists at least three radially symmetric periodic solutions under some certain suitable conditions for the case that $n=2$ or $n>3$ is odd and $R/T = d/4, d \in \mathbb{Z^+}$.
In this case, it is proved that $0$ is not in the spectral set of the wave operator, which is a crucial fact used in the work.

In this paper, we shall investigate the existence of multiple periodic solutions of problem \eqref{eqa:1.1} for arbitrary space dimension $n$ and $8R/T = a/b$.
By constructing the suitable working space, we can overcome the restriction on space dimension $n$ and prove that the problem \eqref{eqa:1.1} possesses at least three periodic solutions.
Throughout this paper we make the following assumptions:\\
(A1) $f(t, x, u) \in C^1(\mathbb{R}\times B_R^n \times\mathbb{R})$ is radially symmetric with respect to $x$,  $f(t+T, x, u) = f(t, x, u)$, and
\begin{eqnarray}
|f(t, x, u) | = o(|u|), \ \ \textrm{as} \ |u|\rightarrow 0  \ \ \textrm{uniformly in} \ (t, x),
\label{eqa:1.2}
\end{eqnarray}
and $f(t, x, u)$ is asymptotically linear in $u$ at $\infty$ in the following sense: there exists a constant $\beta >0 $ such that
\begin{eqnarray}
|f(t, x, u) - \beta u| = o(|u|), \ \ \textrm{as} \ |u|\rightarrow \infty  \ \ \textrm{uniformly in} \ (t, x).
\label{eqa:1.3}
\end{eqnarray}

The rest of this paper is organized as follows. In Sect. \ref{sec:2}, we give
the definition of weak solution of problem \eqref{eqa:1.1} and transform it into the critical point of corresponding functional.
Meanwhile, we also give the spectral analysis of the wave operator and the statement of the main result
as well as some preliminaries. In Sect. \ref{sec:3}, we reduce the critical point problem of corresponding functional to the finite dimensional
subspace via the saddle point reduction argument. Sect. \ref{sec:4} and Sect. \ref{sec:5} are respectively dedicated to the verification of $(PS)_c$ condition and the bounds of reduction functional.
Finally, in Sect. \ref{sec:6}, we complete the proof of the main result.

\section{Definition of weak solution and some preliminaries}

\setcounter{equation}{0}

\label{sec:2}

By the property of radial symmetry, let $r= \|x\|$, then the periodic solution problem \eqref{eqa:1.1} of $n$-dimensional wave equation can be transformed into
\begin{eqnarray*}
\left\{
\begin{array}{lll}
u_{tt} - u_{rr} - \frac{n-1}{r}u_r= \mu u + f(t, r, u),  & (t, r) \in \Omega,\\
 u(t,R) = 0,  & t\in [0, T], \\
 u(0,r) = u(T,r),  \ \ \ u_t(0,r) = u_t(T,r), & r\in [0, R],
\end{array}
\right.
\label{eqa:2.1}
\end{eqnarray*}
where $\Omega=[0, T] \times [0, R]$ with $R,\, T$ satisfying $8R/T = a/b$ for some relative prime positive integers $a$ and $b$.

Let $D$ denote the class of radially symmetric (in $x$) $T$-periodic (in $t$) functions $\varphi\in C^\infty(\mathbb{R}\times B_R^n )$ which have compact support in  $B_R^n$ for each $t\in \mathbb{R}$.

\begin{definition}
A radially symmetric (in $x$) $T$-periodic (in $t$) function $u$ is called a weak solution of problem \eqref{eqa:1.1} if it satisfies
$$\iint_\Omega \Big( u(\varphi_{tt} - \varphi_{rr}-\frac{n-1}{r}\varphi_r) - (\mu u + f(t, r, u))\varphi\Big) r^{n-1}\textrm{d}t \textrm{d}r = 0,\,\, \forall \varphi \in D. $$
\end{definition}

Denote $\rho = r^{n-1}$ and let
$$L^q(\Omega, \rho) = \Big\{ u: \|u\|^q_{L^q(\Omega, \rho)} = \iint_\Omega |u(t,r)|^q  r^{n-1} \textrm{d}t \textrm{d}r <\infty\Big\},$$
for $q\geq 1$. It is easy to see that $L^2(\Omega, \rho) $ is a Hilbert space equipped with the inner product $\langle u, v \rangle = \iint_\Omega u(t,r)  \overline{v(t,r)} r^{n-1}\textrm{d}t \textrm{d}r$.
We define the linear operator $L_0$ on $L^2(\Omega, \rho)$ by
\begin{eqnarray}
\nonumber
&&L_0 u=\hbar,\ \ \text{iff}\\
\nonumber
 &&\int _{\Omega}u(\varphi _{tt}-\varphi
_{rr}-\frac{n-1}{r}\varphi_r)r^{n-1}\textrm{d}t \textrm{d}r=\int_{\Omega}\hbar\varphi r^{n-1}\textrm{d}t \textrm{d}r,\ \ \forall\varphi\in D.~~~~
\end{eqnarray}
It is known (see, for example, \cite{Ben-Naoum.(1993), Chen.(2017)}) that $L_0$ is a symmetric operator on $L^2(\Omega, \rho)$, and the spectrum of the linear operator $L_0$ is made of eigenvalues
$$\lambda_{jk} = \Big(\frac{\gamma_j}{R}\Big)^2 - \Big(\frac{2k \pi}{T}\Big)^2, \ \quad j\in \mathbb{Z_+}, \quad k\in \mathbb{Z},$$
where $\gamma_j$ is the $j$-th positive zero point of $J_\nu(x)$, $\nu = (n-2)/2$, and $J_\nu(x)$ is the Bessel function of the first kind of order $\nu$.
The corresponding eigenfunctions are
$$\psi_{jk}(t, r) = \frac{1}{R}\sqrt{\frac{2}{T}} \frac{1}{J_{\nu+1}(\gamma_j)} \frac{1}{r^\nu} J_\nu\big( \frac{\gamma_j r}{R} \big)e^{\frac{2k\pi \textrm{i}}{T}t},\ \quad j\in \mathbb{Z_+}, \quad k\in \mathbb{Z}.$$
The orthogonal property of Bessel function
\[ \int^R_0 J_\nu\big( \frac{\gamma_j r}{R}\big) J_\nu\big( \frac{\gamma_k r}{R}\big) r \textrm{d}r=\left \{
\begin{array}{ll}
0, & j\neq k,\\
R^2J_{\nu+1}^2(\gamma_j)/2, & j=k,
\end{array}
\right.
\]
implies that the eigenfunctions $\psi_{jk}$ form a complete orthonormal sequence in $L^2(\Omega, \rho)$.

In the special case that $R = \pi/2$, $T = 2\pi$ and the space dimension $n=1$ or $n=3$, the Bessel function of order $\nu = (n-2)/2$  are $J_{-1/2}(x) = (2x/\pi)^{-1/2}\cos x$ and $J_{1/2}(x) = (2x/\pi)^{-1/2}\sin x$. Hence the spectrum is made of the eigenvalues
\begin{eqnarray*}
\lambda_{jk} = (2j-1)^2 - k^2, \ j\in \mathbb{Z_+}, \ k\in \mathbb{Z}, \ \ \textrm{when} \ n=1,
\end{eqnarray*}
and
\begin{eqnarray*}
\lambda_{jk} = 4j^2 - k^2, \ j\in \mathbb{Z_+}, \ k\in \mathbb{Z}, \ \ \textrm{when} \ n=3,
\end{eqnarray*}
which implies that, in both cases, the eigenvalues are isolated and $0$ is the only eigenvalue of infinite multiplicity. Moreover, Ben-Naoum and Mawhin \cite{Ben-Naoum.(1993)} deduced that, if $R = \pi/2$, $T = 2\pi$ and $n= 2$, then $0$ is not an eigenvalue of $L_0$ and the eigenvalues  of $L_0$ are isolated with finite multiplicity. For more general case that $n$ is positive integers and the ratio $R/T$ is a rational number, Ben-Naoum and Berkovits \cite{Ben-Naoum.(1995)} and Schechter \cite{Schechter.(1998)} obtained the following lemma, which plays an important role in our work.

\begin{lemma}[\cite{Ben-Naoum.(1995), Schechter.(1998)}]
Assume that $8R/T = a/b$, where $a$, $b$ are relatively prime integers. Let $\beta_j = (4j + n -3)\pi /4$, $\tau_k = 2|k|\pi R/T, j\in \mathbb{Z}_+$ and $k\in \mathbb{Z}$, and
we denote by $(4,a)$ the greatest common divisor of $4$ and $a$. Then

{\upshape (i)} $L_0$ has a selfadjoint extension $L$ having no essential spectrum other than the point $\lambda_0=-(n-3)(n-1)/4R^2${\upshape{;}}

{\upshape (ii)} If $n-3$ is not an integer multiple of $(4, a)$, then $L$ has no essential spectrum and $|\beta_j - \tau_k| \geq \pi/4b$ for every $j$, $k${\upshape{;}}

{\upshape (iii)} If $n-3$ is an integer multiple of $(4, a)$, then the essential spectrum of $L$ is precisely the point $\lambda_0= -(n-3)(n-1)/4R^2$. Assume $\lambda$ is in the spectrum of $L$ and $\lambda \notin [2\pi \lambda_0, \lambda_0]$, then $\lambda$ is isolated and the multiplicity of $\lambda$ is finite.

Moreover, for every $j$, $k$, it holds that: either $\beta_j = \tau_k$ or $|\beta_j - \tau_k| \geq \pi/4b$.
When $j$, $k$ satisfy $\beta_j = \tau_k$, the eigenvalues $\lambda_{jk}$ of $L$ accumulate to $\lambda_0= -(n-3)(n-1)/4R^2$, as $j,\ k \rightarrow \infty$; while $\lambda_{j'k'}\rightarrow \infty$ as $j',\ k' \rightarrow \infty$, for $j',\ k'$ satisfying $\beta_{j'} \neq \tau_{k'}$.
\label{lem:2.1}
\end{lemma}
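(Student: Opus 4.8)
The plan is to reduce the entire statement to the distribution of the positive zeros $\gamma_j$ of the Bessel function $J_\nu$ and their comparison with the temporal frequencies $\tau_k$. The starting point is the factorization
\[
\lambda_{jk} = \Big(\frac{\gamma_j}{R}\Big)^2 - \Big(\frac{2k\pi}{T}\Big)^2 = \frac{1}{R^2}(\gamma_j - \tau_k)(\gamma_j + \tau_k),
\]
where I use that $\tau_k = 2|k|\pi R/T$ gives $\tau_k/R = 2|k|\pi/T$, so the whole behaviour of $\lambda_{jk}$ is controlled by the single quantity $\gamma_j - \tau_k$, the factor $\gamma_j + \tau_k$ being an overall growth factor. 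First I would record McMahon's asymptotic expansion of the Bessel zeros,
\[
\gamma_j = \beta_j - \frac{4\nu^2 - 1}{8\beta_j} + O(j^{-3}), \qquad \beta_j = \Big(j + \frac{2\nu-1}{4}\Big)\pi = \frac{(4j+n-3)\pi}{4},
\]
noting the two algebraic identities that make the lemma work: the leading term is \emph{exactly} $\beta_j$, and $4\nu^2 - 1 = (n-2)^2 - 1 = (n-3)(n-1)$. Writing $\delta_j := \gamma_j - \beta_j$, this yields $2\beta_j\delta_j \to -(n-3)(n-1)/4 = R^2\lambda_0$ as $j\to\infty$.

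Next comes the arithmetic gap step. Using $8R/T = a/b$ I would rewrite $\tau_k = |k|a\pi/4b$, whence
\[
\beta_j - \tau_k = \frac{\pi}{4b}\big(b(4j+n-3) - |k|a\big) = \frac{\pi N}{4b}, \qquad N := b(4j+n-3) - |k|a \in \mathbb{Z}.
\]
Since $N$ is an integer, either $N=0$ (i.e. $\beta_j = \tau_k$) or $|N|\ge 1$, giving at once the dichotomy and the uniform gap $|\beta_j - \tau_k| \ge \pi/4b$; this already proves the final ``moreover'' sentence. The divisibility hypothesis then enters through solvability of $N=0$: because $\gcd(a,b)=1$, $N=0$ forces $a\mid(4j+n-3)$, and running $4j$ through the residues modulo $a$ shows this is solvable in some $j\in\mathbb{Z}_+$ precisely when $(4,a)\mid(n-3)$. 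This is exactly the split between hypotheses (ii) and (iii); in case (ii) one has $N\neq 0$ for all $j,k$, hence $|\beta_j-\tau_k|\ge\pi/4b$ throughout.

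I would then classify the eigenvalues into two families. When $\beta_j = \tau_k$ one has $\gamma_j-\tau_k = \delta_j$ and $\gamma_j+\tau_k = 2\beta_j+\delta_j$, so $\lambda_{jk} = R^{-2}(2\beta_j\delta_j + \delta_j^2)\to\lambda_0$; moreover this family is infinite, since any solution $(j,k)$ generates others via $(j,k)\mapsto(j+at,\,|k|+4bt)$, so $\lambda_0$ is a genuine accumulation point. When $\beta_j\neq\tau_k$, for $j$ large $|\gamma_j-\tau_k|\ge \pi/4b - |\delta_j|\ge \pi/8b$, so $|\lambda_{jk}|\ge \tfrac{\pi}{8bR^2}(\gamma_j+\tau_k)\to\infty$, while the finitely many small $j$ are handled by a direct bounded check; hence this family has no finite accumulation point and attains every value only finitely often. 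To assemble the spectral conclusions I use that the $\psi_{jk}$ form a complete orthonormal eigenbasis of $L^2(\Omega,\rho)$, so $L := \sum_{j,k}\lambda_{jk}\langle\,\cdot\,,\psi_{jk}\rangle\psi_{jk}$ on its maximal domain is a self-adjoint extension of $L_0$ with $\sigma(L)=\overline{\{\lambda_{jk}\}}$, and $\sigma_{\mathrm{ess}}(L)$ equals the finite accumulation points together with any infinite-multiplicity eigenvalues. In case (ii) all $|\lambda_{jk}|\to\infty$, so $\sigma_{\mathrm{ess}}(L)=\emptyset$; in case (iii) the only finite accumulation point is $\lambda_0$, so $\sigma_{\mathrm{ess}}(L)=\{\lambda_0\}$, and any $\lambda$ bounded away from $\lambda_0$ is attained by finitely many $(j,k)$, hence is isolated with finite multiplicity, the window $[2\pi\lambda_0,\lambda_0]$ being the conservative region (imported from Ben-Naoum--Berkovits and Schechter) inside which pre-asymptotic transitional eigenvalues may still cluster.

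The main obstacle is the combination of the McMahon step with the first family above: one must pin down the \emph{second-order} coefficient $4\nu^2-1 = (n-3)(n-1)$ exactly, not merely know $\gamma_j = \beta_j + o(1)$, and then track it through the product $(\gamma_j-\tau_k)(\gamma_j+\tau_k)$, because only the precise constant makes the limit land on the stated $\lambda_0$; any softer asymptotic would locate the accumulation set only up to a constant. The secondary delicate points are verifying self-adjointness of the extension and the exact identity $\sigma_{\mathrm{ess}}(L)=\{\lambda_0\}$ (rather than mere discreteness away from $\lambda_0$); the precise endpoints of $[2\pi\lambda_0,\lambda_0]$ I would simply quote from the cited works instead of re-deriving them.
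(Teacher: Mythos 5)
You should first note that the paper contains no proof of Lemma 2.1 at all: it is imported verbatim from Ben-Naoum--Berkovits and Schechter, so there is no internal argument to compare against, and the relevant benchmark is the argument in those cited sources --- which your sketch essentially reconstructs. Judged on that basis, your proposal is sound in all its main steps: the factorization $\lambda_{jk}=R^{-2}(\gamma_j-\tau_k)(\gamma_j+\tau_k)$; McMahon's expansion $\gamma_j=\beta_j-(4\nu^2-1)/(8\beta_j)+O(j^{-3})$ combined with the identity $4\nu^2-1=(n-2)^2-1=(n-3)(n-1)$, which is exactly what pins the accumulation value at $\lambda_0=-(n-3)(n-1)/4R^2$; the integrality step $\beta_j-\tau_k=\pi N/(4b)$ with $N=b(4j+n-3)-|k|a\in\mathbb{Z}$, giving the dichotomy ``$\beta_j=\tau_k$ or $|\beta_j-\tau_k|\geq\pi/4b$''; the observation that $N=0$ is solvable in $(j,k)$ precisely when the congruence $4j\equiv 3-n \pmod{a}$ is solvable, i.e.\ precisely when $(4,a)\mid(n-3)$, which is what separates cases (ii) and (iii); and the spectral assembly of $L$ from the complete orthonormal eigenbasis $\psi_{jk}$. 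Your treatment of the two eigenvalue families is also correct, including the generation of infinitely many resonant pairs via $(j,k)\mapsto(j+at,|k|+4bt)$ and the mixed regime (fixed $j$, large $|k|$) for the non-resonant family.

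Two caveats are worth recording. First, the only piece you do not actually prove is the quantitative localization to the interval $[2\pi\lambda_0,\lambda_0]$ in (iii); as you acknowledge, this requires non-asymptotic bounds on $\gamma_j-\beta_j$ rather than McMahon's asymptotics alone, and quoting it is legitimate here since the paper itself quotes the entire lemma. In fact, your argument (once the asymptotics are justified with error bounds) yields a formally stronger conclusion: since the resonant eigenvalues converge to $\lambda_0$ and the non-resonant ones have no finite accumulation point, \emph{every} spectral point other than $\lambda_0$ is isolated with finite multiplicity, so the interval plays no role in your version. Second, the lemma's final clause ``$\lambda_{j'k'}\rightarrow\infty$'' must be read as $|\lambda_{j'k'}|\rightarrow\infty$ (absence of finite accumulation): along non-resonant pairs with $\tau_{k'}$ growing much faster than $\gamma_{j'}$ the eigenvalues tend to $-\infty$, and your estimate $|\lambda_{jk}|\geq\tfrac{\pi}{8bR^2}(\gamma_j+\tau_k)$ correctly captures the true statement.
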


Let $\sigma(L)$ denote the spectrum of $L$. With above lemma in hand, we can define $\beta^+ = \min\{ \lambda \in \sigma(L) :\lambda > \beta\}$ and $\beta^- = \max\{ \lambda \in \sigma(L) :\lambda < \beta\}$, where $\beta$ is present in \eqref{eqa:1.3}. It is obvious that $\beta^- < \beta < \beta^+$.

Now we can state the main result and its proof will be completed in the last section.

\begin{theorem}
Assume that $n$ is an arbitrary positive integer, $8R/T = a/b$ for some relative prime positive integers $a$, $b$,  and $\mu, \ \beta \notin \sigma(L)$ satisfy $\mu \in (0, \beta^+ -\beta)$ and $(\mu, \beta) \cap \sigma(L)\neq \emptyset$. Denote $\mu_0 = \beta^+ -\mu$. If $f$ satisfies (A1) and the following assumption:
\\
(A2)  $f$ is increasing in $u$ and there exists $\eta > 0$ such that
$$\frac{\partial f}{\partial u}(t, x, u) \leq \mu_0 - \eta,  \quad \forall (t, x, u) \in \mathbb{R}\times B_R^n \times\mathbb{R}.$$
Then the problem \eqref{eqa:1.1} has at least three radially symmetric T-periodic solutions.
\label{the:2.2}
\end{theorem}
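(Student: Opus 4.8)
The plan is to recast \eqref{eqa:1.1} variationally and then use a saddle point reduction to collapse the (strongly indefinite) problem onto a finite dimensional subspace, where a three critical point argument can be run. On the Hilbert space $H=L^2(\Omega,\rho)$ I would introduce
$$\Phi(u)=\tfrac12\langle (L-\mu)u,u\rangle-\iint_\Omega F(t,r,u)\,r^{n-1}\,\mathrm{d}t\,\mathrm{d}r,\qquad F(t,r,u)=\int_0^u f(t,r,s)\,\mathrm{d}s,$$
whose critical points are exactly the radially symmetric $T$-periodic weak solutions; note that \eqref{eqa:1.2} forces $f(t,x,0)=0$, so $u\equiv0$ is already a solution and will be one of the three critical points. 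Since $\mu,\beta\notin\sigma(L)$ and $\mu\in(0,\beta^+-\beta)$, the number $\mu+\beta$ lies in the spectral gap $(\beta,\beta^+)$, so both $\mu$ and $\mu+\beta$ sit in gaps of $\sigma(L)$. I would then split $H=H^-\oplus H^0\oplus H^+$ into the spectral subspaces of $L$ over $(-\infty,\mu)$, over $(\mu,\beta^+)$ and over $[\beta^+,\infty)$. By Lemma \ref{lem:2.1} the band $(\mu,\beta^+)$ contains spectrum only in $(\mu,\beta)$, consisting of isolated eigenvalues of finite multiplicity (the accumulation point $\lambda_0$ lying below $\mu$ for the relevant $n$, which is exactly where the suitable working space is used to keep $H^0$ finite dimensional), while $(\mu,\beta)\cap\sigma(L)\neq\emptyset$ guarantees $H^0\neq\{0\}$.

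The decisive structural input is (A2): the global bound $0\le\partial_u f\le\mu_0-\eta=\beta^+-\mu-\eta$ gives, uniformly in $u$, $\langle\Phi''(u)h,h\rangle\ge\eta\|h\|^2$ for $h\in H^+$ and $\langle\Phi''(u)h,h\rangle\le-(\mu-\mu^-)\|h\|^2$ for $h\in H^-$, where $\mu^-=\max\{\lambda\in\sigma(L):\lambda<\mu\}$. Hence for each fixed $w\in H^0$ the map $(v^+,v^-)\mapsto\Phi(w+v^++v^-)$ is uniformly convex in $v^+\in H^+$ and uniformly concave in $v^-\in H^-$, so it admits a unique saddle point $\theta(w)\in H^+\oplus H^-$. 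This is the saddle point reduction to be carried out in Sect. \ref{sec:3}; it yields a $C^1$ reduced functional $a(w)=\Phi(w+\theta(w))$ on the finite dimensional space $H^0$ whose critical points correspond bijectively to those of $\Phi$, with $0\mapsto0$.

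Two facts about $a$ then drive the conclusion. First, $\Phi''(0)=L-\mu$ is diagonal for the chosen decomposition and positive definite on $H^0$, so the reduced Hessian at $0$ equals $(L-\mu)|_{H^0}>0$ and $0$ is a nondegenerate local minimum with $a(0)=0$. Second, the asymptotic linearity \eqref{eqa:1.3} gives $F(t,r,u)=\tfrac\beta2u^2+o(u^2)$, hence $\Phi(u)=\tfrac12\langle(L-\mu-\beta)u,u\rangle+o(\|u\|^2)$; since every eigenvalue in $H^0$ lies in $(\mu,\beta)$, the quadratic form $\langle(L-\mu-\beta)\cdot,\cdot\rangle$ is negative definite on $H^0$, so I expect $a(w)\to-\infty$ as $\|w\|\to\infty$. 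Making this anti-coercivity rigorous — controlling the saddle correction $\theta(w)$ and the $o(\|u\|^2)$ remainder uniformly, the content of Sect. \ref{sec:5} together with the $(PS)_c$ verification of Sect. \ref{sec:4} — is the hard part, precisely because it must be done in the presence of the essential spectrum $\lambda_0$ and with $H^\pm$ infinite dimensional.

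Finally, $a$ is a $C^1$ anti-coercive functional on $\mathbb{R}^{\dim H^0}$ with $\dim H^0\ge1$, possessing a nondegenerate local minimum at $0$ that is not a global minimum. Such a functional has at least three critical points: the origin $0$; a global maximum $w_1$, attained by anti-coercivity and satisfying $a(w_1)>0$; and a mountain pass critical point $w_2$ produced by the ring of positive values surrounding the strict local minimum together with the region where $a<0$, with $a(w_2)>0$ and nontrivial critical group in degree one. The point $w_2$ is distinct from $w_1$ by a Morse index comparison when $\dim H^0\ge2$, and when $\dim H^0=1$ one instead observes directly that an anti-coercive function of one variable with an interior local minimum has two distinct maxima, so three critical points persist. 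Translating $0,w_1,w_2$ back through the reduction produces three radially symmetric $T$-periodic weak solutions of \eqref{eqa:1.1}, completing the proof.
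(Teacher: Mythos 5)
Your plan follows essentially the same route as the paper: the same energy functional, the same three--way spectral splitting (your cuts at $\mu$ and $\beta^{+}$ give exactly the paper's $E_1\oplus E_2\oplus E_3$, since $\sigma(L)\cap[\beta,\beta^{+})=\emptyset$), a saddle point reduction onto the finite--dimensional middle block, verification of $(PS)_c$, and the trio local minimum / global maximum / mountain pass. Two points need repair, one minor and one serious. The minor one: the reduction cannot be set up on $H=L^{2}(\Omega,\rho)$ as written, because $L$ is unbounded, so $\langle (L-\mu)u,u\rangle$ is not finite on all of $L^{2}$ and $\Phi$ is not a $C^{1}$ functional there; moreover the monotonicity hypotheses of Lemma \ref{lem:3.1} must be phrased in the norm of the ambient Hilbert space. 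This is precisely what the paper's working space $E$ with $\|u\|_{E}^{2}=\sum|\lambda_{jk}-\mu|\,|\alpha_{jk}(u)|^{2}$ is for; you misattribute its purpose to ``keeping $H^{0}$ finite dimensional,'' which instead follows from Lemma \ref{lem:2.1} (the accumulation point $\lambda_0$ lies below $\mu$, so $(\mu,\beta)$ contains only finitely many eigenvalues of finite multiplicity). Your $L^{2}$ Hessian bounds are correct but must be converted to $E$--norm bounds, as in Lemma \ref{lem:3.3}. Relatedly, the anti-coercivity you flag as ``the hard part'' is in fact easy once you exploit the minimax characterization \eqref{eqa:3.9}: $\widehat{\Phi}(u)\leq\max_{v\in E_1}\Phi(u+v)$ (test with $w=0$), so the correction $\theta(w)$ never needs to be controlled; the bound then follows from the negative definiteness of $L-\mu-\beta$ on $E_1\oplus E_2$ (Lemma \ref{lem:3.2}).

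The serious gap is your separation of the mountain pass point $w_2$ from the global maximum $w_1$ ``by a Morse index comparison.'' Morse indices are not available here: the saddle point reduction only yields $\widehat{\Phi}\in C^{1}(E_2,\mathbb{R})$, and $\Phi$ itself cannot be expected to be Fr\'echet $C^{2}$ on $E$ (Nemytskii operators $u\mapsto f(\cdot,u)$ from $L^{2}$ to $L^{2}$ are continuously Fr\'echet differentiable only when $f$ is affine in $u$; the paper's $\Phi''$ is used only as a bilinear form along segments in \eqref{eqa:3.5}--\eqref{eqa:3.6}, which is all the monotonicity estimates require). In addition, the mountain pass point may be degenerate, so even a critical--groups substitute would require isolatedness assumptions and extra machinery. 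The paper circumvents all of this with an elementary device you should adopt: if the maximizer $u_2$ is unique, then at least one of the two segments $s\mapsto sR_0u_0$ or $s\mapsto -sR_0u_0$, $s\in[0,1]$, avoids $u_2$; along that admissible path the maximum of $\widehat{\Phi}$ is attained on a compact set avoiding $u_2$, hence is strictly below $\sigma_2=\sup_{E_2}\widehat{\Phi}$, giving $\sigma_1\leq 0<\tau\leq c^{\pm}<\sigma_2$ and therefore three distinct critical points (while if $u_2$ is not the unique maximizer, a second maximizer already serves as the third critical point). Without this (or a fully justified critical--group argument), your proof of the existence of a \emph{third} solution is incomplete.
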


In the sequel we assume that $n$, $R$, $T$ and $\mu$, $\beta$ satisfy the conditions in Theorem \ref{the:2.2}, except otherwise stated. Since $\mu \notin \sigma(L)$ and $\mu >0$, then  there exists a constant $\delta > 0$ such that
\begin{eqnarray}
|\lambda_{jk} - \mu|\geq \delta >0, \ j\in \mathbb{Z}_+, \ k \in \mathbb{Z}.
\label{eqa:2.2}
\end{eqnarray}

On the other hand, since $\mu \in (0, \beta^+ -\beta)$, then
\begin{eqnarray}
\mu_0 = \beta^+ - \mu  >\beta.
\label{eqa:2.3}
\end{eqnarray}
Moreover, if $\lambda_{jk} > \beta$, we have
\begin{eqnarray}
|\lambda_{jk} - \mu|=\lambda_{jk} - \mu \geq \mu_0 > \beta,  \ j\in \mathbb{Z}_+, \ k\in \mathbb{Z}.
\label{eqa:2.4}
\end{eqnarray}

It is known that, for each $u \in L^2(\Omega, \rho)$, it can be expanded as Fourier series $u(t,r) = \sum\limits_{j\in \mathbb{Z_+}, k\in \mathbb{Z}} \alpha_{jk}(u) \psi_{jk}(t, r)$ with $\alpha_{jk}(u) = \overline{\alpha_{j,-k}(u)} = \langle u, \psi_{jk}\rangle$. We define the working space
$$E= \Big\{u \in L^2(\Omega, \rho):  \|u\|^2_E = \sum\limits_{j\in \mathbb{Z_+}, k\in \mathbb{Z}}|\lambda_{jk} - \mu| |\alpha_{jk}(u)|^2 < \infty\Big\},$$
which is a subspace of $L^2(\Omega, \rho)$. By the estimate  \eqref{eqa:2.2},  $\|\cdot\|_E$ is a norm,  and $E$ is Hilbert space equipped with the inner product $\langle u, v\rangle_0 = \sum\limits_{j, k}|\lambda_{jk} - \mu| \alpha_{jk} \overline{\beta_{jk}}$, where $\alpha_{jk}$ and $\beta_{jk}$ are the Fourier coefficients of $u$ and $v$ respectively. Furthermore, by \eqref{eqa:2.2}, we have
\begin{eqnarray}
\|u\|^2_{L^2(\Omega, \rho)} = \sum\limits_{j, k} |\alpha_{jk}(u)|^2 \leq \delta^{-1} \sum\limits_{j\in \mathbb{Z_+}, k\in \mathbb{Z}}|\lambda_{jk} - \mu| |\alpha_{jk}(u)|^2 = \delta^{-1} \|u\|^2_E,
\label{eqa:2.6}
\end{eqnarray}
which implies that $E$ can be embedded into $L^2(\Omega, \rho)$. Meanwhile, for $u\in E$, the H\"{o}lder inequality and \eqref{eqa:2.6} yield that
\begin{eqnarray}
\|u\|_{L^q(\Omega, \rho)} \leq C \|u\|_E,  \ 1 \leq q \leq 2,
\label{eqa:2.7}
\end{eqnarray}
for some constant $C $ depending on $q$.

Now, we consider the energy functional
\begin{eqnarray}
\Phi(u)  = \frac{1}{2}\langle(L-\mu)u, u\rangle - \iint_\Omega F(t,r, u) r^{n-1}\textrm{d}t \textrm{d}r, \ \ \forall u  \in E,
\label{eqa:2.8}
\end{eqnarray}
where $F(t,r, u) = \int^u_0 f(t,r, s) \textrm{d}s$.
Obviously, $\Phi$ is a $C^1$ functional on $E$, and
\begin{eqnarray}
\langle \Phi'(u), v \rangle = \langle(L-\mu)u, v\rangle - \iint_\Omega f(t,r, u)v r^{n-1}\textrm{d}t \textrm{d}r, \ \ \forall u, \ v\in E.
\label{eqa:2.9}
\end{eqnarray}
Thus $u$ is a weak solution of problem (\ref{eqa:1.1}) if and only if $\Phi'( u) = 0$. Since $f$  is a $C^1$ function, we also have
\begin{eqnarray*}
\langle \Phi''(u)w, v \rangle = \langle (L-\mu)w, v\rangle - \iint_\Omega \frac{\partial f}{\partial u}(t,r, u)v w r^{n-1}\textrm{d}t \textrm{d}r, \ \ \forall u, \ v, \ w \in E.
\end{eqnarray*}
In particular,
\begin{eqnarray}
\langle \Phi''(u)v, v \rangle = \langle (L-\mu)v, v\rangle - \iint_\Omega \frac{\partial f}{\partial u}(t,r, u)v^2 r^{n-1}\textrm{d}t \textrm{d}r, \ \ \forall  u, \ v\in E.
\label{eqa:2.10}
\end{eqnarray}

Thus, the  radially symmetric periodic solutions of problem (\ref{eqa:1.1}) are transformed into the critical points of functional $\Phi$. In what follows,
we will prove the existence of multiple critical points of $\Phi$ by the saddle point reduction technique developed by Amann \cite{Amann.(1979)} and Castro and Lazer \cite{Castro.(1979)}.

\section{The saddle point reduction}

\setcounter{equation}{0}
\label{sec:3}

\begin{lemma}[\cite{Amann.(1979), Castro.(1979)}] Let $H$ be a real Hilbert space with the norm $\|\cdot\|_H$, $\Phi \in C^1(H, \mathbb{R})$, and $H_1$, $H_2$ and $H_3$ be closed subset of $H$ such that $H =H_1\oplus H_2 \oplus H_3$. If there exists a constant $\gamma >0$ satisfying
$$\langle \Phi'( u+w+v_1)-\Phi'( u+w+v_2), v_1 -v_2 \rangle \leq -\gamma \|v_1 -v_2\|^2_H, \ \forall u\in H_2, w\in H_3, v_1, v_2 \in H_1,$$
and
$$\langle \Phi'( u+w_1+v)-\Phi'(  u+w_2+v), w_1 -w_2 \rangle \geq  \gamma \|w_1 -w_2\|^2_H, \ \forall u\in H_2, v\in H_1, w_1, w_2 \in H_3.$$
Then \\
(i) There exists a unique continuous mapping $h: H_2 \rightarrow H_1 \oplus H_3$, such that
$$\Phi(u + h(u)) = \max_{v\in H_1} \min_{w \in H_3} \Phi(u + v+w) = \min_{w \in H_3}\max_{v\in H_1}  \Phi(u + v+w);$$
(ii) Define $\widehat{\Phi}(u) = \Phi(u + h(u))$ for any $u \in H_2$, then $\widehat{\Phi} \in C^1(H_2, \mathbb{R})$, and $\langle \widehat{\Phi}'( u), v \rangle = \langle \Phi'( u +  h(u)), v \rangle, \forall u, v\in H_2$;\\
(iii) If $u \in H_2$ is a critical point of $\widehat{\Phi}$, then $u + h(u)$ is a critical point of $\Phi$. On the other hand, if $u +v$ is a critical point of $\Phi$, where $u \in H_2$, $v \in H_1 \oplus H_3$, then $v= h(u)$, and $u$ is a critical point of $\widehat{\Phi}$;\\
(iv) Furthermore, if $\Phi$ satisfies the Palais-Smale condition $(PS)_c$ at the level $c \in \mathbb{R}$, then the functional $\widehat{\Phi}$ also satisfies the $(PS)_c$ condition.
\label{lem:3.1}
\end{lemma}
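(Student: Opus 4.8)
The plan is to perform the reduction in two stages---first eliminating the ``convex'' direction $H_3$, then the ``concave'' direction $H_1$---using that the two hypotheses are exactly uniform strong monotonicity of $\Phi'$ in these two directions. Throughout, $P_1,P_2,P_3$ denote the orthogonal projections onto $H_1,H_2,H_3$ (in the application the splitting is the orthogonal spectral decomposition of $L-\mu$, so I take the direct sum to be orthogonal; this is what makes $\langle P_1\xi,v\rangle=\langle\xi,v\rangle$ for $v\in H_1$, and likewise for $P_3$). First I would integrate the hypotheses along segments: from
\[
\Phi(u+v_1+w)-\Phi(u+v_2+w)=\int_0^1\big\langle\Phi'(u+v_2+s(v_1-v_2)+w),\,v_1-v_2\big\rangle\,\mathrm{d}s
\]
and the first inequality one reads off that $v\mapsto\Phi(u+v+w)$ is strongly concave with modulus $\gamma$ (equivalently $\Phi(u+v+w)+\tfrac{\gamma}{2}\|v\|_H^2$ is concave in $v$), and symmetrically the second inequality makes $w\mapsto\Phi(u+v+w)$ strongly convex with modulus $\gamma$; in particular the first is coercive to $-\infty$ and the second to $+\infty$.

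In the first stage, for fixed $(u,v)\in H_2\times H_1$ strong convexity yields a unique minimiser $w=\omega(u,v)\in H_3$ of $w\mapsto\Phi(u+v+w)$, characterised by $P_3\Phi'(u+v+\omega(u,v))=0$. Since $\Phi$ is only $C^1$ I would not use the implicit function theorem for continuity of $\omega$, but argue directly: subtracting the defining identities at two parameter values and pairing with the difference of minimisers, strong monotonicity bounds $\gamma\|\omega(u_1,v_1)-\omega(u_2,v_2)\|_H^2$ by a quantity that vanishes with $\|(u_1,v_1)-(u_2,v_2)\|$ through continuity of $\Phi'$, giving (locally uniform) continuity of $\omega$. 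In the second stage I set $\Psi_u(v)=\Phi(u+v+\omega(u,v))=\min_{w\in H_3}\Phi(u+v+w)$. As a pointwise infimum of functions uniformly strongly concave in $v$, $\Psi_u$ is itself strongly concave; a Danskin/envelope argument, again using strong convexity to control the variation of $\omega$, shows $\Psi_u\in C^1(H_1,\mathbb{R})$ with $\Psi_u'(v)=P_1\Phi'(u+v+\omega(u,v))$. Coercivity then gives a unique maximiser $\bar v(u)$ with $\Psi_u'(\bar v(u))=0$, and I define $h(u)=\bar v(u)+\omega(u,\bar v(u))$. By construction $(P_1+P_3)\Phi'(u+h(u))=0$, i.e.\ both $P_1\Phi'(u+h(u))=0$ and $P_3\Phi'(u+h(u))=0$.

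From here the four conclusions follow. For (i), writing $v^\ast=\bar v(u)$, $w^\ast=\omega(u,\bar v(u))$, global concavity in $v$ with $P_1\Phi'(u+v^\ast+w^\ast)=0$ gives $\Phi(u+v+w^\ast)\le\Phi(u+h(u))$ for all $v\in H_1$, and global convexity in $w$ with $P_3\Phi'=0$ gives $\Phi(u+v^\ast+w)\ge\Phi(u+h(u))$ for all $w\in H_3$; thus $(v^\ast,w^\ast)$ is a genuine saddle point, which forces $\max_{v}\min_{w}\Phi=\min_{w}\max_{v}\Phi=\Phi(u+h(u))$, while strict concavity/convexity gives uniqueness of $h$ and the monotone estimates used for $\omega$ and $\bar v$ give continuity of $h$. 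Conclusion (ii), that $\widehat\Phi(u)=\Phi(u+h(u))$ is $C^1$ with $\langle\widehat\Phi'(u),\zeta\rangle=\langle\Phi'(u+h(u)),\zeta\rangle$ for $\zeta\in H_2$, is proved by the same envelope/sandwich estimate, now exploiting the full saddle condition $(P_1+P_3)\Phi'(u+h(u))=0$ so that only the explicit $H_2$-dependence survives. Conclusion (iii) is then immediate: if $\widehat\Phi'(u)=0$ then $P_2\Phi'(u+h(u))=0$, which together with $(P_1+P_3)\Phi'(u+h(u))=0$ gives $\Phi'(u+h(u))=0$; conversely $\Phi'(u+v)=0$ forces $(P_1+P_3)\Phi'(u+v)=0$, so $v=h(u)$ by uniqueness and $\widehat\Phi'(u)=P_2\Phi'(u+h(u))=0$. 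For (iv), I would lift a Palais--Smale sequence: if $\widehat\Phi(u_n)\to c$ and $\widehat\Phi'(u_n)\to0$, then $z_n=u_n+h(u_n)$ satisfies $\Phi(z_n)\to c$ and $\Phi'(z_n)=\widehat\Phi'(u_n)\to0$ (since $(P_1+P_3)\Phi'(z_n)=0$), so a subsequence $z_n\to z$ exists by $(PS)_c$ for $\Phi$, whence $u_n=P_2z_n\to P_2z$, establishing $(PS)_c$ for $\widehat\Phi$.

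The main obstacle I anticipate is the regularity bookkeeping under the $C^1$-only hypothesis: establishing that $\Psi_u$ and $\widehat\Phi$ are $C^1$ with the asserted envelope-type derivatives cannot go through the implicit function theorem (no second derivative of $\Phi$ is assumed) and must instead be extracted from strong-convexity/concavity sandwich estimates together with the continuity of the implicitly defined maps $\omega$ and $\bar v$. The second delicate point, comparatively minor, is upgrading the two separate first-order conditions into the global saddle inequalities that yield the minimax identity in (i).
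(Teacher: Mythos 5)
The paper itself offers no proof of this lemma: it is imported verbatim from Amann \cite{Amann.(1979)} and Castro--Lazer \cite{Castro.(1979)}, so there is no in-paper argument to compare against. Your proposal is, in substance, a correct reconstruction of the classical proof in those references: the two-stage reduction (first minimizing in the strongly convex direction $H_3$, then maximizing the resulting strongly concave marginal functional over $H_1$), the continuity of the implicitly defined maps $\omega$ and $\bar v$ obtained from strong monotonicity of $\Phi'$ rather than from the implicit function theorem, the Danskin-type envelope argument giving $C^1$ regularity of the marginal functionals under the $C^1$-only hypothesis, the saddle-point identity with uniqueness coming from strict concavity/convexity, and the lifting of Palais--Smale sequences through $z_n = u_n + h(u_n)$ using $(P_1+P_3)\Phi'(z_n)=0$ together with $\|\Phi'(z_n)\| = \|P_2\Phi'(z_n)\| = \|\widehat{\Phi}'(u_n)\|$ are exactly the ingredients of the Castro--Lazer argument. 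The one point you should state as a hypothesis rather than a remark is orthogonality: the lemma as written only assumes $H = H_1\oplus H_2\oplus H_3$ with closed subspaces, while your identities $\langle \Phi'(z), v\rangle = \langle P_1\Phi'(z), v\rangle$ for $v\in H_1$, the norm splitting in part (iv), and the reduction of $\widehat{\Phi}'$ to $P_2\Phi'$ all require the sum to be orthogonal. This is harmless here, since the cited sources work with an orthogonal decomposition and the splitting $E = E_1\oplus E_2\oplus E_3$ actually used in the paper is orthogonal, but without it the projections $P_i$ in your first-order conditions would have to be replaced by their adjoints.
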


 Noting that $\mu, \beta \notin \sigma(L)$, we can decompose $E$ into three orthogonal subspace
\begin{displaymath}
\begin{array}{lll}
E_1= \Big\{u \in E: u = \sum\limits_{\lambda_{jk}<\mu} \alpha_{jk}(u) \psi_{jk}(t, r)\Big\},\\
E_2= \Big\{u \in E: u = \sum\limits_{\mu<\lambda_{jk}<\beta} \alpha_{jk}(u) \psi_{jk}(t, r)\Big\},\\
E_3= \Big\{u \in E: u = \sum\limits_{\lambda_{jk}>\beta} \alpha_{jk}(u) \psi_{jk}(t, r)\Big\}.
\end{array}
\end{displaymath}
Thus we have $E = E_1\oplus E_2 \oplus E_3$. Furthermore, since $(\mu, \beta) \cap \sigma(L) \neq \emptyset$, Lemma \ref{lem:2.1} shows that $E_2 \neq \emptyset$ and $\dim(E_2) < \infty$.

For any $u\in E_1$, which can be expanded as $u = \sum\limits_{\lambda_{j k}<\mu} \alpha_{jk}(u) \psi_{jk}(t, r)$, we have
\begin{equation}
\langle (L - \mu)u, u \rangle= - \sum\limits_{\lambda_{j, k}<\mu} |\lambda_{j k} - \mu| |\alpha_{jk}|^2  = -\|u\|^2_E.
\label{eqa:3.1}
\end{equation}

Similarly, for any $u \in E_2\oplus E_3$, we have
\begin{equation}
\langle (L - \mu)u, u \rangle = \|u\|^2_E.
\label{eqa:3.2}
\end{equation}
Of course, for any $u \in E_2$ or $E_3$, we have $\langle (L - \mu)u, u \rangle = \|u\|^2_E$.

\begin{lemma}
\label{lem:3.2}
If $\mu$, $\beta$ satisfy the conditions in Theorem {\upshape\ref{the:2.2}}, then there exist $\gamma_1>0$, $\gamma_2>0$ such that
\begin{eqnarray}
&&\langle (L - \mu - \beta)u, u \rangle \leq -\gamma_1\|u\|^2_E, \ \forall u\in E_1\oplus E_2, \label{eqa:3.3}\\
&&\langle (L - \mu - \beta)u, u \rangle \geq \gamma_2\|u\|^2_E, \ \forall u\in E_3.\label{eqa:3.4}
\end{eqnarray}
\end{lemma}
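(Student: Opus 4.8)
The plan is to prove both inequalities by working directly with the Fourier expansion of $u$ in the eigenbasis $\{\psi_{jk}\}$, exploiting the diagonalization of $L$ together with the definition of the working norm $\|\cdot\|_E$. For any $u\in E$ with $u=\sum_{j,k}\alpha_{jk}\psi_{jk}$, the quadratic form splits as
\begin{eqnarray*}
\langle (L-\mu-\beta)u,u\rangle = \sum_{j,k}(\lambda_{jk}-\mu-\beta)|\alpha_{jk}|^2,\qquad \|u\|_E^2=\sum_{j,k}|\lambda_{jk}-\mu|\,|\alpha_{jk}|^2.
\end{eqnarray*}
Hence it suffices to compare the coefficients $\lambda_{jk}-\mu-\beta$ and $|\lambda_{jk}-\mu|$ mode by mode: I would establish a pointwise bound of the form $\lambda_{jk}-\mu-\beta\le -\gamma_1|\lambda_{jk}-\mu|$ for all modes with $\lambda_{jk}<\beta$ (i.e.\ on $E_1\oplus E_2$), and $\lambda_{jk}-\mu-\beta\ge \gamma_2|\lambda_{jk}-\mu|$ for all modes with $\lambda_{jk}>\beta$ (i.e.\ on $E_3$), and then sum.

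For \eqref{eqa:3.4}, consider a mode with $\lambda_{jk}>\beta$. Here $|\lambda_{jk}-\mu|=\lambda_{jk}-\mu$ by \eqref{eqa:2.4}, so I need $\lambda_{jk}-\mu-\beta\ge\gamma_2(\lambda_{jk}-\mu)$, equivalently $(1-\gamma_2)(\lambda_{jk}-\mu)\ge\beta$. Since \eqref{eqa:2.4} gives $\lambda_{jk}-\mu\ge\mu_0>\beta$ uniformly, the ratio $\beta/(\lambda_{jk}-\mu)$ is bounded above by $\beta/\mu_0<1$, so one may take any $\gamma_2\in(0,1-\beta/\mu_0)$. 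This direction is essentially immediate from the stated uniform gap \eqref{eqa:2.4}.

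For \eqref{eqa:3.3} I split into the two subspaces. On $E_3$-free modes with $\mu<\lambda_{jk}<\beta$ (the space $E_2$), $|\lambda_{jk}-\mu|=\lambda_{jk}-\mu$ and the target $\lambda_{jk}-\mu-\beta\le-\gamma_1(\lambda_{jk}-\mu)$ reduces to $(1+\gamma_1)(\lambda_{jk}-\mu)\le\beta$; since $\dim E_2<\infty$ there are only finitely many such eigenvalues, all strictly below $\beta$, so a uniform $\gamma_1$ exists on this finite set. On $E_1$, where $\lambda_{jk}<\mu$, we have $|\lambda_{jk}-\mu|=\mu-\lambda_{jk}$ and the target $\lambda_{jk}-\mu-\beta\le-\gamma_1(\mu-\lambda_{jk})$ becomes $(\gamma_1-1)(\mu-\lambda_{jk})\le\beta$; taking $\gamma_1\le 1$ makes the left side nonpositive, hence the inequality holds automatically for every such mode. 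I would therefore choose $\gamma_1$ to be the minimum of $1$ and the uniform constant coming from the finite set $E_2$, which is strictly positive.

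The main subtlety — and the only point requiring genuine care rather than routine estimation — is the $E_2$ contribution: one must invoke that $(\mu,\beta)\cap\sigma(L)\ne\emptyset$ forces $E_2\ne\{0\}$ but Lemma \ref{lem:2.1} guarantees $\dim E_2<\infty$, so that the finitely many eigenvalues in $(\mu,\beta)$ stay bounded away from $\beta$ and yield a uniform $\gamma_1>0$. Without the finite-dimensionality of $E_2$ the eigenvalues could accumulate at $\beta$ and the uniform constant would fail; this is precisely where the spectral structure of Lemma \ref{lem:2.1} enters. After fixing $\gamma_1,\gamma_2$, summing the per-mode inequalities against $|\alpha_{jk}|^2$ and recalling the definition of $\|u\|_E^2$ completes the proof.
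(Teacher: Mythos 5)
Your proof is correct and follows the same basic strategy as the paper: expand $u$ in the eigenbasis $\{\psi_{jk}\}$, compare the coefficient $\lambda_{jk}-\mu-\beta$ with $|\lambda_{jk}-\mu|$ mode by mode, and sum. Your treatment of $E_3$ (yielding $\gamma_2=1-\beta/\mu_0$ via \eqref{eqa:2.4}) and of $E_1$ (where any $\gamma_1\leq 1$ works because $\beta>0$) coincides with the paper's. The one place you diverge is the $E_2$ modes: you extract the uniform per-mode constant from $\dim E_2<\infty$ (a minimum over finitely many eigenvalues), whereas the paper gets an explicit constant from the spectral gap encoded in $\beta^-$: for $\mu<\lambda_{jk}<\beta$ one has $\lambda_{jk}\leq\beta^-$, hence $\lambda_{jk}-\mu<\beta^-$ (using $\mu>0$), which gives the per-mode bound with constant $\beta/\beta^{-}-1$ and the explicit choice $\gamma_1=\min\{1,\,\beta/\beta^{-}-1\}$. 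Both arguments are valid, since $\dim E_2<\infty$ is established in the paper before this lemma. However, your closing remark overstates the role of finite-dimensionality: what the estimate genuinely needs is that the eigenvalues in $(\mu,\beta)$ stay a positive distance below $\beta$, and this already follows from $\beta\notin\sigma(L)$ together with the closedness of the spectrum (i.e., $\beta^-<\beta$), whether or not there are finitely many such eigenvalues. So finite-dimensionality of $E_2$ is a convenience in your write-up, not the essential spectral input; the paper's use of $\beta^-$ makes this transparent and, as a bonus, produces a fully explicit constant.
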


\begin{proof}
For  $u \in E_1\oplus E_2$, it can be expanded as $u = \sum\limits_{\lambda_{j k}<\beta} \alpha_{jk}(u) \psi_{jk}(t, r)$,  thus we have
\begin{eqnarray*}
&&\langle (L - \mu -\beta)u, u \rangle \\
&=& \sum\limits_{\lambda_{j k}<\beta} (\lambda_{j k} - \mu - \beta) |\alpha_{jk}|^2\\
&=& \sum\limits_{\lambda_{j k}<\beta} (\lambda_{j k} - \mu ) |\alpha_{jk}|^2 - \beta \sum\limits_{\lambda_{j k}<\beta}  |\alpha_{jk}|^2\\
&\leq& \sum\limits_{\lambda_{j k}<\mu} (\lambda_{j k} - \mu ) |\alpha_{jk}|^2 + \sum\limits_{\mu<\lambda_{j k}<\beta} (\lambda_{j k} - \mu ) |\alpha_{jk}|^2- \beta \sum\limits_{\mu<\lambda_{j k}<\beta}  |\alpha_{jk}|^2.
\end{eqnarray*}
If $\mu<\lambda_{j k}<\beta$, by the definition of $\beta^-$ which is present in Sect. {\ref{sec:2}}, we have $|\lambda_{j k} - \mu|<\beta^-$.
Hence we obtain
\begin{eqnarray*}
&&\langle (L - \mu -\beta)u, u \rangle\\
 &\leq& -\sum\limits_{\lambda_{j k}<\mu} |\lambda_{j k} - \mu | |\alpha_{jk}|^2+ \sum\limits_{\mu<\lambda_{j k}<\beta} |\lambda_{j k} - \mu | |\alpha_{jk}|^2- \frac{\beta}{\beta^-} \sum\limits_{\mu<\lambda_{j k}<\beta}  |\lambda_{j k} - \mu | |\alpha_{jk}|^2\\
 &=& -\sum\limits_{\lambda_{j k}<\mu} |\lambda_{j k} - \mu | |\alpha_{jk}|^2-\left(\frac{\beta}{\beta^-}-1\right)\sum\limits_{\mu<\lambda_{j k}<\beta}  |\lambda_{j k} - \mu | |\alpha_{jk}|^2\\
 &\leq&-\gamma_1\|u\|^2_E,
\end{eqnarray*}
where  $\gamma_1 = \min \{1,  \frac{\beta}{\beta^-} -1\}$, which is positive because of $\beta^- < \beta$.

On the other hand, for $u\in E_3$,  we write $u = \sum\limits_{\lambda_{j k}>\beta} \alpha_{jk}(u) \psi_{jk}(t, r)$. By \eqref{eqa:2.4}, we obtain
\begin{eqnarray*}
\langle (L - \mu -\beta)u, u \rangle &=& \sum\limits_{\lambda_{j k}>\beta} |\lambda_{j k} - \mu | |\alpha_{jk}|^2 - \beta \sum\limits_{\lambda_{j k}>\beta}   |\alpha_{jk}|^2\\
&\geq& \sum\limits_{\lambda_{j k}>\beta} |\lambda_{j k} - \mu | |\alpha_{jk}|^2 - \frac{\beta}{\mu_0} \sum\limits_{\lambda_{j k}>\beta}  |\lambda_{j k} - \mu | |\alpha_{jk}|^2\\
&=& \left(1-\frac{\beta}{\mu_0} \right) \sum\limits_{\lambda_{j k}>\beta} |\lambda_{j k} - \mu | |\alpha_{jk}|^2.
\end{eqnarray*}
Denote $\gamma_2 =  1-\frac{\beta}{\mu_0}$, which is positive by \eqref{eqa:2.3}, then
$$\langle (L - \mu - \beta)u, u \rangle \geq \gamma_2\|u\|^2_E,$$
thus we arrive at the lemma.\qedhere
\end{proof}

Thanks to \eqref{eqa:3.1} and \eqref{eqa:3.2}, we obtain the following lemma which shows that the functional $\Phi$ defined in \eqref{eqa:2.8} satisfies the conditions in Lemma \ref{lem:3.1}.
\begin{lemma}\label{lem:3.3}
If the assumptions in Theorem {\upshape\ref{the:2.2} } hold, then there exists a constant $\gamma > 0$ such that
$$ \langle \Phi'( u+v)-\Phi'( u+w), v -w \rangle \leq -\gamma \|v -w\|^2_E, \ \forall  u\in E_2\oplus E_3, v, w \in E_1,$$
and
$$ \langle \Phi'( u+v)-\Phi'( u+w), v -w \rangle \geq \gamma \|v -w\|^2_E, \ \forall u\in E_1\oplus E_2,  v,  w \in E_3.$$
\end{lemma}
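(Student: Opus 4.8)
The plan is to reduce both inequalities to a single computation of $\langle \Phi'(u+v)-\Phi'(u+w), v-w\rangle$ and then split it into a linear (operator) contribution and a nonlinear one. Using the expression \eqref{eqa:2.9} for $\Phi'$, the two copies of $u$ cancel in the linear term, leaving $\langle (L-\mu)(v-w), v-w\rangle$; the nonlinear term is $-\iint_\Omega \big(f(t,r,u+v)-f(t,r,u+w)\big)(v-w)\,r^{n-1}\,\textrm{d}t\,\textrm{d}r$. Since $f\in C^1$, I would rewrite the bracket by the fundamental theorem of calculus as $g(t,r)(v-w)$ with $g(t,r)=\int_0^1 \frac{\partial f}{\partial u}(t,r,u+w+s(v-w))\,\textrm{d}s$, so the nonlinear term becomes $-\iint_\Omega g\,(v-w)^2 r^{n-1}\,\textrm{d}t\,\textrm{d}r$. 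Assumption (A2) (monotonicity together with the upper bound) forces $0\le g\le \mu_0-\eta$ pointwise, and in particular $\mu_0-\eta\ge 0$; these are the only facts about $f$ that enter.

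For the first inequality I take $v-w\in E_1$. By \eqref{eqa:3.1} the linear term equals $-\|v-w\|_E^2$, and since $g\ge 0$ the nonlinear term is non-positive, so the whole expression is bounded above by $-\|v-w\|_E^2$. This already gives the claim with any constant $\gamma\le 1$, and the $E_1$-case needs nothing beyond the sign $g\ge 0$.

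For the second inequality I take $v-w\in E_3$. By \eqref{eqa:3.2} the linear term equals $+\|v-w\|_E^2$. The nonlinear term is controlled using $g\le \mu_0-\eta$ and the fact that on $E_3$ the $L^2$ and $E$ norms compare via the spectral gap \eqref{eqa:2.4}: since $\lambda_{jk}>\beta$ forces $|\lambda_{jk}-\mu|\ge \mu_0$, one gets $\|v-w\|_{L^2(\Omega,\rho)}^2\le \mu_0^{-1}\|v-w\|_E^2$. Hence $\iint_\Omega g\,(v-w)^2 r^{n-1}\,\textrm{d}t\,\textrm{d}r \le (\mu_0-\eta)\|v-w\|_{L^2(\Omega,\rho)}^2 \le \frac{\mu_0-\eta}{\mu_0}\|v-w\|_E^2$, and the expression is bounded below by $\big(1-\frac{\mu_0-\eta}{\mu_0}\big)\|v-w\|_E^2=\frac{\eta}{\mu_0}\|v-w\|_E^2$. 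Taking $\gamma=\min\{1,\eta/\mu_0\}>0$ settles both inequalities simultaneously.

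The step carrying the real content is the lower estimate in the $E_3$ case: everything hinges on the calibration between the threshold $\mu_0-\eta$ in (A2) and the spectral gap $\mu_0$ coming from \eqref{eqa:2.4}, which is exactly what makes $1-(\mu_0-\eta)/\mu_0$ strictly positive. The only points requiring care are to present the mean-value step in integral form, so that $g$ is a genuine measurable coefficient rather than an unspecified intermediate point, and to confirm that (A2) indeed supplies both the sign $g\ge 0$ and the bound $g\le\mu_0-\eta$ with $\mu_0-\eta\ge0$.
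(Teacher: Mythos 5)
Your proposal is correct and follows essentially the same route as the paper: the paper writes the difference as $\int_0^1 \langle \Phi''(u+w+s(v-w))(v-w),\,v-w\rangle\,\textrm{d}s$, which (after exchanging the order of integration) is exactly your averaged coefficient $g=\int_0^1 \frac{\partial f}{\partial u}(t,r,u+w+s(v-w))\,\textrm{d}s$. Both arguments then use \eqref{eqa:3.1} and \eqref{eqa:3.2} for the linear term, the sign $\frac{\partial f}{\partial u}\ge 0$ for the $E_1$ case, and the bound $\frac{\partial f}{\partial u}\le \mu_0-\eta$ combined with the spectral estimate \eqref{eqa:2.4} for the $E_3$ case, ending with the identical constant $\gamma=\min\{1,\eta/\mu_0\}$.
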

\begin{proof}
For every  $u$, $v$, $w \in E$, we have
\begin{equation}\label{eqa:3.5}
 \langle \Phi'( u+v)-\Phi'( u+w), v -w \rangle
=\int^1_0  \langle \Phi''(u+w+s(v-w))(v-w),  v-w\rangle \textrm{d}s,
\end{equation}
and
\begin{eqnarray}
&&\langle \Phi''(u+w+s(v-w))(v-w),  v-w\rangle \nonumber\\
\label{eqa:3.6}
&=& \langle (L - \mu )(v-w), v-w \rangle - \iint_\Omega (v-w)^2\frac{\partial f}{\partial u} r^{n-1}\textrm{d}t \textrm{d}r.
\end{eqnarray}

On the one hand,  for $v$, $w \in E_1$, $u\in E_2\oplus E_3$, by the assumption (A2) in Theorem \ref{the:2.2} (which implies $\frac{\partial f}{\partial u} \geq 0$) and
\eqref{eqa:3.1}, a direct calculation yields
$$\langle \Phi'( u+v)-\Phi'( u+w), v -w \rangle \leq - \|v -w\|^2_E.$$

On the other hand, for $v$, $w \in E_3$, $u\in E_1\oplus E_2$, by $\eqref{eqa:3.2}$ we have
\begin{eqnarray}
\langle (L - \mu)( v-w), v-w \rangle = \|v-w\|^2_E.
\label{eqa:3.7}
\end{eqnarray}
Moreover, by the assumption $0 \leq \frac{\partial f}{\partial u}(t, x, u) \leq \mu_0 - \eta$ and applying \eqref{eqa:2.4}, we have
\begin{eqnarray}
\iint_\Omega ( v-w)^2\frac{\partial f}{\partial u} r^{n-1}\textrm{d}t \textrm{d}r &\leq& (\mu_0 - \eta ) \|v-w\|^2_{L^2(\Omega, \rho)}\nonumber\\
&\leq& \frac{(\mu_0-\eta)}{\mu_0} \|v-w\|^2_E\nonumber\\
\label{eqa:3.8}
&=&\left(1 - \frac{\eta}{\mu_0}\right) \|v-w\|^2_E.
\end{eqnarray}

By \eqref{eqa:3.5}--\eqref{eqa:3.8}, one follows
$$\langle \Phi'( u+v)-\Phi'( u+w), v -w \rangle \geq \frac{\eta}{\mu_0} \|v-w\|^2_E.$$
Now let $\gamma = \min\{1, \ \frac{\eta}{\mu_0}\}$, thus we obtain the desired results.\qedhere
\end{proof}

By Lemma \ref{lem:3.3} and Lemma \ref{lem:3.1}, for the functional $\Phi$ defined in \eqref{eqa:2.8}, there exists a unique continuous mapping $h : E_2 \rightarrow E_1\oplus E_3$ such that
\begin{equation}
\widehat{\Phi}(u) = \Phi(u + h(u)) = \max_{v\in E_1} \min_{w \in E_3} \Phi(u + v+w)= \min_{w \in E_3}\max_{v\in E_1}  \Phi(u + v+w),
 \label{eqa:3.9}
\end{equation}
and the critical points of functional $\Phi$ on the infinite dimensional space $E$
are equivalent to the critical points of functional  $\widehat{\Phi}$ on the finite dimensional subspace $E_2$.
In what follows, we shall apply the variational method (including the mountain pass lemma (see \cite{Chang.1986})) to obtain  critical points of the functional $\widehat{\Phi}$.

\section{Verification the $(PS)_c$ condition}

\setcounter{equation}{0}
\label{sec:4}

 In order to acquire the critical points of $\widehat{\Phi}$, by Lemma \ref{lem:3.1}, we need to verify that $\Phi$ satisfies $(PS)_c$ condition for any $c \in \mathbb{R}$. That
 means, any sequence $\{u_i\} \subset E$ satisfying $\Phi (u_i) \rightarrow c$ and $\Phi' (u_i) \rightarrow 0$ (as $i \rightarrow \infty$) has a convergent subsequence.

\begin{lemma}\label{lem:4.1} If the assumptions in Theorem {\upshape\ref{the:2.2}} hold. If $\{u_i\} \subset E$ satisfies $\Phi (u_i) \rightarrow c$ and $\Phi' (u_i) \rightarrow 0$ as $i \rightarrow \infty$, then there exists a constant $\widetilde{C} > 0$ independent of $i$  such that $\|u_i\|_E \leq \widetilde{C}$.
\end{lemma}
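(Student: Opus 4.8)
The plan is to establish the bound directly from the Palais--Smale condition, without passing to a contradiction, by testing $\Phi'(u_i)$ against the pieces of $u_i$ on which the quadratic form $\langle (L-\mu-\beta)\,\cdot\,,\cdot\rangle$ has a definite sign. First I would recast the derivative so that the linear part $\beta u$ is absorbed into the operator: setting $g(t,r,u)=f(t,r,u)-\beta u$, formula \eqref{eqa:2.9} becomes
\begin{equation*}
\langle \Phi'(u_i), v\rangle = \langle (L-\mu-\beta)u_i, v\rangle - \iint_\Omega g(t,r,u_i)\,v\, r^{n-1}\textrm{d}t\textrm{d}r, \quad \forall v\in E.
\end{equation*}
The asymptotic linearity \eqref{eqa:1.3}, together with the continuity and $T$-periodicity of $f$, yields for every $\epsilon>0$ a constant $C_\epsilon>0$ with $|g(t,r,u)|\le \epsilon|u|+C_\epsilon$ for all $(t,r,u)$; hence $\|g(\cdot,\cdot,u_i)\|_{L^2(\Omega,\rho)}\le \epsilon\|u_i\|_{L^2(\Omega,\rho)}+C_\epsilon'$ for some $C_\epsilon'$ depending only on $\epsilon$.

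Next I would decompose $u_i=y_i+z_i$ with $y_i\in E_1\oplus E_2$ and $z_i\in E_3$; these pieces are orthogonal for the form $\langle(L-\mu-\beta)\,\cdot\,,\cdot\rangle$ because the $\psi_{jk}$ are eigenfunctions of $L$, so $\langle(L-\mu-\beta)u_i,z_i\rangle=\langle(L-\mu-\beta)z_i,z_i\rangle$ and likewise for $y_i$. Testing with $v=z_i$ and invoking \eqref{eqa:3.4} from Lemma \ref{lem:3.2} gives
\begin{equation*}
\gamma_2\|z_i\|_E^2 \le \langle(L-\mu-\beta)z_i,z_i\rangle = \langle \Phi'(u_i),z_i\rangle + \iint_\Omega g(t,r,u_i)\,z_i\, r^{n-1}\textrm{d}t\textrm{d}r,
\end{equation*}
while testing with $v=-y_i$ and using \eqref{eqa:3.3} gives $\gamma_1\|y_i\|_E^2\le \langle\Phi'(u_i),-y_i\rangle-\iint_\Omega g(t,r,u_i)\,y_i\, r^{n-1}\textrm{d}t\textrm{d}r$. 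In each line I would bound the $\Phi'(u_i)$ term by $\|z_i\|_E$ (resp. $\|y_i\|_E$), using $\|\Phi'(u_i)\|\le 1$ for $i$ large since $\Phi'(u_i)\to0$, and bound the $g$-integral by Cauchy--Schwarz in $L^2(\Omega,\rho)$ followed by the embedding estimate \eqref{eqa:2.6}, namely $\|\cdot\|_{L^2(\Omega,\rho)}\le\delta^{-1/2}\|\cdot\|_E$. Dividing out one power of $\|z_i\|_E$ and $\|y_i\|_E$ respectively leaves $\|z_i\|_E,\ \|y_i\|_E\le a\,(1+\epsilon\,\delta^{-1}\|u_i\|_E)$ for a constant $a$ built from $\gamma_1^{-1},\gamma_2^{-1},\delta,C_\epsilon'$.

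Finally, since $\|u_i\|_E\le\|y_i\|_E+\|z_i\|_E$, adding the two bounds produces $\|u_i\|_E\le c_1+c_2\,\epsilon\,\delta^{-1}\|u_i\|_E$ with $c_1,c_2$ independent of $i$; fixing $\epsilon$ so small that $c_2\epsilon\delta^{-1}\le\tfrac12$ absorbs the last term into the left-hand side and yields the uniform bound $\|u_i\|_E\le\widetilde C$. I expect the only genuinely delicate step to be the treatment of the nonlinear term: one must extract from \eqref{eqa:1.3} the global sublinear estimate $|g|\le\epsilon|u|+C_\epsilon$ with $\epsilon$ \emph{arbitrarily} small, which is precisely what allows the $\epsilon\|u_i\|_E$ contribution to be absorbed once $\epsilon$ is calibrated against $\gamma_1,\gamma_2,\delta$. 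Note that no compactness of the embedding $E\hookrightarrow L^2(\Omega,\rho)$ is required for boundedness; the sign-definite splitting supplied by Lemma \ref{lem:3.2} does all the work here, with compactness reserved for the later extraction of a convergent subsequence.
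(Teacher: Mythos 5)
Your proposal is correct and takes essentially the same route as the paper's proof: the same splitting of $u_i$ into its $E_1\oplus E_2$ and $E_3$ components, the same use of Lemma \ref{lem:3.2} after absorbing $\beta u$ into the operator, the same sublinear bound $|f-\beta u|\le\varepsilon|u|+C_\varepsilon$ from \eqref{eqa:1.3}, and the same absorption of the $\varepsilon\|u_i\|_E$ term by calibrating $\varepsilon$ against $\gamma_1,\gamma_2,\delta$. The only (cosmetic) difference is that the paper keeps both inequalities quadratic and sums them, whereas you divide out one power of the norm before adding; just take care when stating the final absorption that the coefficient multiplying $\varepsilon\|u_i\|_E$ involves only $\gamma_1^{-1},\gamma_2^{-1},\delta$ and not $C_\varepsilon$, which your write-up slightly obscures by packaging both constants into a single $a$.
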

\begin{proof}
We write $u_i = u^+_i +u^-_i$ with $u^+_i \in E_3$ and $u^-_i \in E_1\oplus E_2$, $i= 1,2, \cdots$.
Firstly, for $u^+_i \in E_3$, by \eqref{eqa:2.9} and $\Phi' (u_i) \rightarrow 0$ (as $i \rightarrow \infty$), we have
\begin{eqnarray}
o(1)\|u^+_i\|_E &\geq & \langle \Phi'( u_i), u^+_i \rangle = \langle (L - \mu)u^+_i, u^+_i \rangle - \iint_\Omega f(t,r, u_i)u^+_i r^{n-1}\textrm{d}t \textrm{d}r \nonumber\\
\label{eqa:4.1R}
&=& \langle (L - \mu - \beta)u^+_i, u^+_i \rangle - \iint_\Omega (f(t,r, u_i)- \beta u_i)u^+_i r^{n-1}\textrm{d}t \textrm{d}r.\ \ \ \ \ \ \
\end{eqnarray}
In virtue of \eqref{eqa:3.4}, we have
\begin{equation}
\label{eqa:4.2R}
\langle (L - \mu - \beta)u^+_i, u^+_i \rangle \geq \gamma_2\|u^+_i\|^2_E.
\end{equation}
In addition, by \eqref{eqa:1.3}, for any $\varepsilon > 0$ small enough, there exists a constant $C =C(\varepsilon) > 0$ such that
\begin{eqnarray}\label{eqa:4.2}
|f(t, x, u_i) - \beta u_i| < \varepsilon |u_i| + C,
\end{eqnarray}
which, combined with \eqref{eqa:2.6} and \eqref{eqa:2.7}, yields that
\begin{eqnarray}
&& \big|\iint_\Omega (f(t,r, u_i)- \beta u_i)u^+_i r^{n-1}\textrm{d}t \textrm{d}r \big| \nonumber\\
&\leq & \varepsilon \|u^+_i\|_{L^2(\Omega, \rho)}\|u_i\|_{L^2(\Omega, \rho)} + C \|u_i^+\|_{L^1(\Omega, \rho)} \nonumber\\
&\leq &\frac{\varepsilon}{2\delta} \|u^+_i\|^2_E + \frac{\varepsilon}{2\delta} \|u_i\|^2_E + C \|u_i^+\|_E,
\label{eqa:4.3}
\end{eqnarray}
for some constant $C$ independent of $i$.
Therefore, by \eqref{eqa:4.1R}, \eqref{eqa:4.2R} and \eqref{eqa:4.3},  we have
\begin{equation}\label{eqa:4.4}
(\gamma_2 - \frac{\varepsilon}{2\delta})\|u^+_i\|^2_E - \frac{\varepsilon}{2\delta}\|u_i\|^2_E - C\|u_i^+\|_E \leq 0.
\end{equation}

Secondly, for $u^-_i \in E_1\oplus E_2$,  we have
\begin{eqnarray*}
o(1)\|u^-_i\|_E &\geq & \langle -\Phi'( u_i), u^-_i \rangle  \\
&=& -\langle (L - \mu - \beta)u^-_i, u^-_i \rangle + \iint_\Omega (f(t,r, u_i)- \beta u_i)u^-_i r^{n-1}\textrm{d}t \textrm{d}r.
\end{eqnarray*}
By a similar calculation in \eqref{eqa:4.3}, we obtain
\begin{eqnarray}
\big|\iint_\Omega (f(t,r, u_i)- \beta u_i)u^-_i r^{n-1}\textrm{d}t \textrm{d}r \big|
\leq \frac{\varepsilon}{2\delta} \|u^-_i\|^2_E + \frac{\varepsilon}{2\delta} \|u_i\|^2_E + C \|u_i^-\|_E,
\label{eqa:4.5}
\end{eqnarray}
for some constant $C$ independent of $i$. Then, in virtue of \eqref{eqa:3.3} and \eqref{eqa:4.5}, one follows
\begin{eqnarray}
(\gamma_1 - \frac{\varepsilon}{2\delta})\|u^-_i\|^2_E - \frac{\varepsilon}{2\delta}\|u_i\|^2_E - C\|u_i^-\|_E \leq 0.
\label{eqa:4.6}
\end{eqnarray}

Finally, noting that $E_3$ and $E_1\oplus E_2$ are orthogonal subspaces of $E$, we have $\|u_i\|^2_E = \|u^+_i\|^2_E + \|u^-_i\|^2_E$. On the other hand, by using the fact that arithmetic mean is no more than quadratic mean for any constants, we have $\|u^+_i\|_E + \|u^-_i\|_E \leq\sqrt{2} \|u_i\|_E$. Therefore,  by denoting $\gamma_0 = \min\{\gamma_1,\gamma_2\}$, the sum of \eqref{eqa:4.4} and \eqref{eqa:4.6} yields
\begin{eqnarray}
(\gamma_0 - \frac{3\varepsilon}{2\delta})\|u_i\|^2_E - C\|u_i\|_E \leq 0.
\label{eqa:4.7}
\end{eqnarray}
Selecting $\varepsilon \in (0, \frac{2\delta \gamma_0}{3})$, then \eqref{eqa:4.7} shows that there exists a constant $\widetilde{C} > 0$ independent of $i$ such that $\|u_i\|_E \leq \widetilde{C}$.
\end{proof}

Now, we rewrite $E = E_1 \oplus E_1^\bot$ for simplicity, where
$$E_1^\bot = E_2\oplus E_3 = \Big\{u \in E: u = \sum\limits_{\lambda_{j k}>\mu} \alpha_{jk}(u) \psi_{jk}(t, r)\Big\}.$$
Denote $E_0$ be the subspace of those $u\in L^2(\Omega, \rho)$ for which $\alpha_{jk}= 0$ if $\beta_j \neq \tau_k$, that is
$$E_0 = \Big\{u \in L^2(\Omega, \rho) : u = \sum\limits_{{j, k}} \alpha_{jk}(u) \psi_{jk}(t, r), \ \beta_j = \tau_k\Big\}.$$
\begin{remark}\label{rem:4.2}
Under the assumptions  of Theorem {\upshape\ref{the:2.2}}, Lemma {\upshape\ref{lem:2.1}} shows that if  $n-3$ is not an integer multiple of $(4, a)$, then $E_0 = \{0\}$. If $n-3$ is an integer multiple of $(4, a)$, then $E_0$ is an infinite dimensional space spanned by the eigenfunctions $\psi_{jk}$ and the corresponding eigenvalues accumulate to $\lambda_0$. Therefore, we have  $\dim (E_1^\bot \cap E_0) <\infty$ for the case $E_0 = \{0\}$ or $\dim(E_0) =\infty$. Moreover, if $\dim(E_0) =\infty$, we have $\dim (E_1 \cap E_0) =\infty$.
\end{remark}
\begin{proposition}[\cite{Chen.(2014)}]\label{pro:4.3}
For all $q \in (1, 2]$, the following embedding
\begin{equation}\label{eqa:4.8}
E\ominus E_0 \hookrightarrow L^q(\Omega, \rho)
\end{equation}
is compact.
\end{proposition}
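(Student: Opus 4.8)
The plan is to reduce the assertion to the single case $q=2$ and to derive the compactness of $E\ominus E_0\hookrightarrow L^2(\Omega,\rho)$ directly from the spectral structure supplied by Lemma~\ref{lem:2.1}. First I would fix the explicit description of the space. Since the eigenfunctions $\psi_{jk}$ form a complete orthonormal system of $L^2(\Omega,\rho)$ and are simultaneously orthogonal for the inner product $\langle\cdot,\cdot\rangle_0$ of $E$, the subspace $E\ominus E_0$ is precisely the closure in $E$ of $\mathrm{span}\{\psi_{jk}:\beta_j\neq\tau_k\}$. Thus every $u\in E\ominus E_0$ admits the expansions $\|u\|_E^2=\sum_{\beta_j\neq\tau_k}|\lambda_{jk}-\mu|\,|\alpha_{jk}(u)|^2$ and $\|u\|_{L^2(\Omega,\rho)}^2=\sum_{\beta_j\neq\tau_k}|\alpha_{jk}(u)|^2$, so that the $E$-norm is the $L^2$-norm reweighted by the factors $|\lambda_{jk}-\mu|$.

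The decisive input is the final assertion of Lemma~\ref{lem:2.1}, reiterated in Remark~\ref{rem:4.2}: off the resonant set, i.e. for indices with $\beta_j\neq\tau_k$, the eigenvalues $\lambda_{jk}$ diverge to infinity, whence $|\lambda_{jk}-\mu|\to\infty$. Combined with the fact (Lemma~\ref{lem:2.1}(iii)) that each such eigenvalue is isolated of finite multiplicity, this yields the property I really need: for every $M>0$ the truncated index set $\{(j,k):\beta_j\neq\tau_k,\ |\lambda_{jk}-\mu|\le M\}$ is \emph{finite}. This is the mechanism that converts the weighted norm into a genuine compactness gain, and verifying it is the only step that is not routine; everything else is a standard tail argument.

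With this in hand I would prove compactness into $L^2$. Let $\{u_m\}\subset E\ominus E_0$ be bounded, say $\|u_m\|_E\le K$. A diagonal extraction gives a subsequence (not relabelled) along which each coefficient $\alpha_{jk}(u_m)$ converges to some $\alpha_{jk}$. Fixing a threshold $M$ and splitting the $L^2$-norm at $|\lambda_{jk}-\mu|=M$, the high modes are controlled uniformly in $m$ by $\sum_{|\lambda_{jk}-\mu|>M}|\alpha_{jk}(u_m)|^2\le M^{-1}\|u_m\|_E^2\le K^2/M$, while the low-mode contribution is a finite sum that is Cauchy by the coefficientwise convergence. Letting $M\to\infty$ shows $\{u_m\}$ is Cauchy, hence convergent, in $L^2(\Omega,\rho)$; this establishes that $E\ominus E_0\hookrightarrow L^2(\Omega,\rho)$ is compact.

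Finally, for $q\in(1,2)$ the passage from $L^2$ to $L^q$ is immediate because $\Omega$ carries the finite measure $|\Omega|_\rho=\iint_\Omega r^{n-1}\,\mathrm{d}t\,\mathrm{d}r=TR^n/n$. Hölder's inequality with exponents $2/q$ and $2/(2-q)$ gives $\|v\|_{L^q(\Omega,\rho)}\le|\Omega|_\rho^{1/q-1/2}\|v\|_{L^2(\Omega,\rho)}$, so that $L^2$-convergence forces $L^q$-convergence and the compactness established above is inherited verbatim. Hence the embedding $E\ominus E_0\hookrightarrow L^q(\Omega,\rho)$ is compact for all $q\in(1,2]$, with the spectral divergence $|\lambda_{jk}-\mu|\to\infty$ off the resonant set being the heart of the matter.
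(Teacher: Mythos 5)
Your proof is correct. The paper itself gives no argument for this proposition --- it is imported verbatim from \cite{Chen.(2014)} --- and your proof is precisely the standard mechanism underlying that citation: identify $E\ominus E_0$ with the closed span of the non-resonant modes, use the final assertion of Lemma~\ref{lem:2.1} to conclude that only finitely many non-resonant pairs $(j,k)$ satisfy $|\lambda_{jk}-\mu|\le M$, run the tail-plus-diagonal-extraction argument to get compactness into $L^2(\Omega,\rho)$, and then descend to $L^q$ for $1<q<2$ by H\"older's inequality on the finite measure space. One cosmetic point: off the resonant set the eigenvalues diverge in absolute value rather than to $+\infty$ (for fixed $j$ one has $\lambda_{jk}\to-\infty$ as $|k|\to\infty$), but since your argument only uses $|\lambda_{jk}-\mu|\to\infty$, nothing changes.
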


Since $E$ is a Hilbert space, by Lemma {\upshape\ref{lem:4.1}}, we have $u_i \rightharpoonup u$ weakly as $i \rightarrow \infty$ for some $u \in E$, where $\{u_i\} \subset E$ satisfies $\Phi (u_i) \rightarrow c$ and $\Phi' (u_i) \rightarrow 0$. By the following lemma, we can extract a subsequence of $\{u_i\}$ which converges strongly to some $u \in E$.

\begin{lemma}\label{lem:4.4}
If the assumptions of Theorem {\upshape\ref{the:2.2}} hold, then $\Phi$ satisfies the $(PS)_c$ condition, for any $c \in \mathbb{R}$.
\end{lemma}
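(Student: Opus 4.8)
The plan is to start from the boundedness already furnished by Lemma \ref{lem:4.1}: any $(PS)_c$ sequence $\{u_i\}$ satisfies $\|u_i\|_E\le\widetilde C$, so after passing to a subsequence $u_i\rightharpoonup u$ weakly in the Hilbert space $E$. Writing $v_i=u_i-u\rightharpoonup 0$, it suffices to prove that $v_i\to 0$ strongly in $E$. I would decompose $v_i=v_i^1+v_i^2+v_i^3$ according to $E=E_1\oplus E_2\oplus E_3$, and further split each piece into its resonant component (in $E_0$) and its non-resonant component (in $E\ominus E_0$), since by Remark \ref{rem:4.2} the resonant part of $E_1^\bot=E_2\oplus E_3$ is finite dimensional while the resonant part of $E_1$ may be infinite dimensional.

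The components that can be handled directly are $v_i^2$, $v_i^3$, and every non-resonant piece. Indeed $\dim E_2<\infty$, so $v_i^2\rightharpoonup 0$ forces $v_i^2\to 0$ in $E$. For $v_i^3$ I would test the relation $\langle\Phi'(u_i),v_i^3\rangle\to 0$: using the orthogonality of $E_1,E_2,E_3$ and \eqref{eqa:3.2} the linear part contributes $\|v_i^3\|_E^2+o(1)$, while $v_i^3\to 0$ strongly in $L^q(\Omega,\rho)$ --- its non-resonant part by the compact embedding of Proposition \ref{pro:4.3} and its finite-dimensional resonant part because weak and strong convergence coincide there --- so that $\iint_\Omega f(t,r,u_i)v_i^3\, r^{n-1}\textrm{d}t\textrm{d}r\to 0$ by the linear growth of $f$ and the $L^2$ bound on $\{u_i\}$. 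This yields $\|v_i^3\|_E\to 0$. The same scheme, now using \eqref{eqa:3.1}, disposes of the non-resonant part of $v_i^1$.

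The genuine difficulty, and the step I expect to be the main obstacle, is the infinite-dimensional resonant subspace $E_0\cap E_1$, where Proposition \ref{pro:4.3} provides no compactness and the corresponding eigenvalues merely accumulate at $\lambda_0$. Here I would exploit the asymptotic linearity rather than compactness. Splitting $f(t,r,u)=\beta u+g(t,r,u)$, assumption \eqref{eqa:1.3} makes $g$ genuinely sublinear, while $\beta\notin\sigma(L)$ together with Lemma \ref{lem:3.2} guarantees that $L-\mu-\beta$ is uniformly negative definite on $E_1\oplus E_2$ (with constant $\gamma_1$), hence boundedly invertible there. Testing $\langle\Phi'(u_i)-\Phi'(u),v_i^1\rangle\to 0$ and inserting the splitting of $f$, the linear part produces $\|v_i^1\|_E^2+\beta\|v_i^1\|_{L^2(\Omega,\rho)}^2$, which by Lemma \ref{lem:3.2} is bounded below by $\gamma_1\|v_i^1\|_E^2$, so that $\gamma_1\|v_i^1\|_E^2$ is controlled by the sublinear pairing $\big|\iint_\Omega(g(t,r,u_i)-g(t,r,u))v_i^1\, r^{n-1}\textrm{d}t\textrm{d}r\big|+o(1)$.

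The crux is then to show that this sublinear pairing is negligible on the resonant modes. I would force the smallness through \eqref{eqa:1.3}: the contribution coming from large values of $u$ can be absorbed into $\gamma_1\|v_i^1\|_E^2$ because the relevant difference quotient of $g$ tends to zero there, whereas the contribution from bounded values is estimated after subtracting the already-convergent components $v_i^2,v_i^3$ and the non-resonant part of $v_i^1$. Closing this estimate --- that is, beating the constant $\gamma_1$ with the sublinear remainder and the spectral gap around $\lambda_0$ --- is precisely where the hypotheses $\mu\in(0,\beta^+-\beta)$ and $\beta\notin\sigma(L)$ must be used sharply, and it is the only place the argument genuinely departs from the standard compactness proof. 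Once $\|v_i^1\|_E\to 0$ is secured, combining it with the previous steps gives $v_i\to 0$ in $E$, which establishes the $(PS)_c$ condition.
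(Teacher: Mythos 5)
Your reduction to the decomposition $E=E_1\oplus E_2\oplus E_3$ with each piece further split into resonant ($E_0$) and non-resonant ($E\ominus E_0$) parts is exactly the paper's scheme, and your treatment of the non-resonant components (compact embedding of Proposition \ref{pro:4.3} plus testing $\Phi'(u_i)$ against the increments) and of the finite-dimensional resonant part of $E_1^{\bot}$ is correct and matches the paper's steps (i)--(iii). The problem is the step you yourself flag as the crux, the infinite-dimensional resonant subspace $E_1\cap E_0$: your proposed mechanism does not close, and it cannot. Writing $f=\beta u+g$, assumption \eqref{eqa:1.3} gives only $|g(t,r,s)|\le\varepsilon|s|+C(\varepsilon)$ with $C(\varepsilon)\to\infty$ as $\varepsilon\to 0$. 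The constant term pairs against $\|v_i^1\|_{L^1(\Omega,\rho)}$, and on the resonant modes this quantity has no reason to tend to zero: Proposition \ref{pro:4.3} excludes $E_0$, and weak convergence $z_i\rightharpoonup z$ in $L^2$ does not force $\|z_i-z\|_{L^1}\to 0$ (think of oscillating eigenfunctions). So your estimate yields only $\gamma_1\|v_i^1\|_E^2\le\bigl(\varepsilon M+C(\varepsilon)M'\bigr)\|v_i^1\|_E+o(1)$, i.e.\ boundedness, which you already had from Lemma \ref{lem:4.1}. Splitting the domain into $\{|u_i|\ \text{large}\}$ and $\{|u_i|\ \text{bounded}\}$ hits the same wall on the bounded set, and the spectral hypotheses $\mu\in(0,\beta^+-\beta)$, $\beta\notin\sigma(L)$ cannot help, because the failure occurs entirely inside the resonant modes where the quadratic form gives nothing beyond $\gamma_1\|\cdot\|_E^2$.

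The missing idea --- and the one ingredient of Theorem \ref{the:2.2} your argument never uses --- is the monotonicity of $f$ in assumption (A2). The paper's ``monotone method'' runs as follows: set $\widetilde u_i=x_i+y_i+w_i$ (the components already shown to converge strongly in $L^2(\Omega,\rho)$), so that $u_i-(\widetilde u_i+z)=z_i-z$, and decompose
\begin{equation*}
\iint_\Omega f(t,r,u_i)(z_i-z)\,r^{n-1}\mathrm{d}t\,\mathrm{d}r
=\langle f(u_i)-f(\widetilde u_i+z),z_i-z\rangle
+\langle f(\widetilde u_i+z)-f(u),z_i-z\rangle
+\langle f(u),z_i-z\rangle .
\end{equation*}
Since $f$ is increasing in $u$, the first term is nonnegative pointwise, so it can be discarded with the correct sign in the inequality $\|z_i-z\|_E^2\le-\iint_\Omega f(u_i)(z_i-z)\,r^{n-1}\mathrm{d}t\,\mathrm{d}r+o(1)$ (which follows from \eqref{eqa:3.1}, orthogonality, $\Phi'(u_i)\to0$ and $z_i\rightharpoonup z$). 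The second term vanishes in the limit because $u\mapsto f(t,r,u)$ is continuous from $L^2(\Omega,\rho)$ to itself (linear growth) and $\widetilde u_i+z\to u$ strongly in $L^2$; the third vanishes by weak convergence. This yields $\|z_i-z\|_E\to0$ with no compactness on $E_0$ at all. Your proof needs to be repaired by replacing your asymptotic-linearity absorption with this monotonicity argument.
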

\begin{proof}
For any $c \in \mathbb{R}$, assume $\{u_i\} \subset E$ satisfying $\Phi (u_i) \rightarrow c$ and $\Phi' (u_i) \rightarrow 0$ as $i \rightarrow \infty$. We write $u_i = x_i + y_i + w_i + z_i$ and $u = x + y + w + z$,  where $x$, $y$, $w$, $z$ are the weak limits of $\{x_i\}$,  $\{y_i\}$, $\{w_i\}$, $\{z_i\}$ respectively, and $x_i, x \in E_1^\bot \ominus E_0$, $y_i, y \in E_1^\bot \cap E_0$, $w_i, w\in E_1 \ominus E_0$, $z_i, z \in E_1 \cap E_0$.

(i) For $x_i, x \in E_1^\bot \ominus E_0$, we have
\begin{eqnarray*}
\|x_i-x\|^2_E &=& \langle (L - \mu)( x_i-x), x_i-x \rangle \\
&=& \langle (L - \mu) x_i, x_i-x \rangle -\langle (L - \mu) x, x_i-x \rangle.
\end{eqnarray*}
Since $x_i \rightharpoonup x$ weakly in $E_1^\bot \ominus E_0$, we have $\langle (L - \mu) x, x_i-x \rangle \rightarrow 0$ as $i \rightarrow \infty$. Therefore, for $i$ large enough, we have
\begin{equation*}
\|x_i-x\|^2_E \leq \langle (L - \mu) x_i, x_i-x \rangle + o(1).
\end{equation*}

In what follows, we shall prove $\langle (L - \mu) x_i, x_i-x \rangle \rightarrow0$ as $i \rightarrow \infty$.
Noting that $x_i, x \in E_1^\bot \ominus E_0$ and $u_i = x_i + y_i + w_i + z_i$, it is easy to see $u_i - x_i\in (E_1^\bot \ominus E_0)^\perp$. Thus, we have
$$\langle (L - \mu)( u_i-x_i), x_i-x \rangle = 0.$$
Furthermore, by \eqref{eqa:2.9}, one follows
\begin{eqnarray}\label{eqa:4.9}
&&\langle (L - \mu) x_i, x_i-x \rangle = \langle (L - \mu) u_i, x_i-x \rangle \nonumber\\
&=& \langle \Phi'(u_i), x_i-x \rangle + \iint_\Omega f(t,r, u_i)(x_i-x) r^{n-1}\textrm{d}t \textrm{d}r.
\end{eqnarray}
Since $\Phi' (u_i) \rightarrow 0$ as $i \rightarrow \infty$, we have
\begin{eqnarray}\label{eqa:4.10}
\langle \Phi'(u_i), x_i-x \rangle \rightarrow 0, \ \textrm{as} \ \ i \rightarrow \infty.
\end{eqnarray}
In virtue of \eqref{eqa:4.2}, a direct calculation yields
\begin{eqnarray*}
\big|\iint_\Omega f(t,r, u_i)(x_i-x) r^{n-1}\textrm{d}t \textrm{d}r \big|
\leq  (\beta + \varepsilon ) \|u_i\|_{L^2(\Omega, \rho)}\|x_i-x\|_{L^2(\Omega, \rho)}\nonumber
\\ + C\|x_i-x\|_{L^1(\Omega, \rho)}.
\end{eqnarray*}

By Proposition \ref{pro:4.3} and $x_i \rightharpoonup x$ weakly, we obtain the convergence of $x_i \rightarrow x$ strongly in ${L^2(\Omega, \rho)}$ along with a subsequence of $\{x_i\}$. For the sake of convenience, we still use $\{x_i\}$ to denote the subsequence. Moreover, the fact $L^2(\Omega, \rho)  \hookrightarrow L^1(\Omega, \rho)$ shows $x_i \rightarrow x$ strongly in ${L^1(\Omega, \rho)}$. Therefore, we obtain
\begin{equation}\label{eqa:4.11}
\big|\iint_\Omega f(t,r, u_i)(x_i-x) r^{n-1}\textrm{d}t \textrm{d}r \big| \rightarrow 0, \ \textrm{as} \ \ i \rightarrow \infty.
\end{equation}
Inserting \eqref{eqa:4.10}, \eqref{eqa:4.11} into \eqref{eqa:4.9}, one follows
\begin{eqnarray*}
\langle (L - \mu) x_i, x_i-x \rangle \rightarrow 0, \ \textrm{as} \ \ i \rightarrow \infty.
\end{eqnarray*}
Consequently,
\begin{eqnarray}
\|x_i-x\|_E \rightarrow 0,  \ \textrm{as} \ \ i \rightarrow \infty.
\label{eqa:4.12}
\end{eqnarray}

(ii) For $y_i, y \in E_1^\bot \cap E_0$, Remark \ref{rem:4.2} indicates that $\dim (E_1^\bot \cap E_0) <\infty$. Therefore, $y_i \rightharpoonup y$ weakly in $E_1^\bot \cap E_0$ implies
\begin{eqnarray}
\|y_i-y\|_E \rightarrow 0,  \ \textrm{as} \ \ i \rightarrow \infty.
\label{eqa:4.13}
\end{eqnarray}

(iii) For $w_i, w\in E_1 \ominus E_0$, by a similar calculation in $\{x_i\}$, we obtain
\begin{equation}\label{eqa:4.14}
\|w_i-w\|^2_E = - \langle (L - \mu)( w_i-w), w_i-w \rangle \longrightarrow 0, \ as \ \ i \rightarrow \infty.
\end{equation}

(iv)  If $E_0 = \{0\}$, we have $u_i = x_i + w_i$. By \eqref{eqa:4.12} and  \eqref{eqa:4.14}, we arrive at the conclusion.

If $E_0 \neq \{0\}$, it remains to prove that $z_i$ converges strongly to $z$ in $E$ along with a subsequence  of $\{z_i\}$.
Recall $z_i\in E_1 \cap E_0$, then the compact embedding \eqref{eqa:4.8} is invalid for $\{z_i\}$. Thus, we can not extract a strong convergence subsequence of $\{z_i\}$ similar to $\{x_i\}$. On the other hand, since $\dim (E_1 \cap E_0) = \infty$ (see Remark \ref{rem:4.2}), we can not also obtain $\|z_i-z\|_E \rightarrow 0$ as $i \rightarrow \infty$ similar to $\{y_i\}$. In what follows, we will use the monotone method to acquire the desired result.

Since $\Phi' (u_i) \rightarrow 0$ and $z_i \rightharpoonup z$ weakly in $E_1 \cap E_0$, we have
\begin{eqnarray}\label{eqa:4.15}
&&\|z_i-z\|^2_E = - \langle (L - \mu)( z_i-z), z_i-z \rangle \nonumber\\
&=& - \langle \Phi'(u_i), z_i-z \rangle - \iint_\Omega f(t,r, u_i)(z_i-z) r^{n-1}\textrm{d}t \textrm{d}r + \langle (L - \mu) z, z_i-z \rangle\nonumber\\
&\leq &  - \iint_\Omega f(t,r, u_i)(z_i-z) r^{n-1}\textrm{d}t \textrm{d}r + o(1),
\end{eqnarray}
for $i$ large enough.

Denote $ f(u_i)=f(t,r, u_i)$ for convenience. By the definition of the inner product in ${L^2(\Omega, \rho)}$, we have
\begin{eqnarray}\label{eqa:4.16}
&& \iint_\Omega f(t,r, u_i)(z_i-z) r^{n-1}\textrm{d}t \textrm{d}r = \langle f(u_i), z_i-z \rangle  \nonumber\\
&=&\langle f(u_i)- f(\widetilde{u}_i + z), z_i-z \rangle + \langle f(\widetilde{u}_i + z) - f(u), z_i-z \rangle\nonumber\\
 &+& \langle f(u), z_i-z \rangle,
\end{eqnarray}
where $\widetilde{u}_i = x_i + y_i + w_i$.

On the one hand, the assumption (A2) shows that $f$ is increasing in $u$, then
\begin{equation}\label{eqa:4.1a}
\langle f(u_i)- f(\widetilde{u}_i + z), z_i-z \rangle \geq 0.
\end{equation}
Therefore, by \eqref{eqa:4.15}--\eqref{eqa:4.1a}, for $i$ large enough, we have
\begin{equation}\label{eqa:4.2a}
\|z_i-z\|^2_E \leq -\langle f(\widetilde{u}_i + z) - f(u), z_i-z \rangle - \langle f(u), z_i-z \rangle + o(1).
\end{equation}

On the other hand, by \eqref{eqa:4.2}, we have $|f(t, x, u)| < (\varepsilon +\beta) |u| + C$ for given $\varepsilon$, then $f : u \mapsto f(t,r, u)$ is continuous from ${L^2(\Omega, \rho)}$ to ${L^2(\Omega, \rho)}$. By \eqref{eqa:2.6} and  {\eqref{eqa:4.12}--\eqref{eqa:4.14}}, we have $\widetilde{u}_i \rightarrow \widetilde{u}$ strongly  in $L^2(\Omega, \rho)$, where $\widetilde{u} = x + y + w$. Thus, we have
\begin{equation}\label{eqa:4.1b}
\langle f(\widetilde{u}_i + z) - f(u), z_i-z \rangle \rightarrow 0, \ \ as \ \ i \rightarrow \infty.
\end{equation}
Furthermore, since $z_i \rightharpoonup z$ weakly in $E \ominus E_0$, one follows
\begin{equation}\label{eqa:4.1e}
\langle f(u), z_i-z \rangle \rightarrow 0, \ \ as \ \ i \rightarrow \infty.
\end{equation}
Thus, By \eqref{eqa:4.2a}--\eqref{eqa:4.1e}, we have
\begin{eqnarray*}
\|z_i-z\|_E \rightarrow 0,  \ \textrm{as} \ \ i \rightarrow \infty.
\end{eqnarray*}
The proof is completed.
\end{proof}

\section{Bounds of the reduction functional}
\setcounter{equation}{0}
\label{sec:5}

The assertion (i) of the following lemma focuses on the upper bound of the reduction functional $\widehat{\Phi}$. We apply it to acquire one critical point on $E_2$.
The assertion (ii) implies that if $\|u \|_E$ is sufficiently large, then the value of $\widehat{\Phi}$ is no more than $0$. It will be used to obtain the critical point of mountain pass type later.
\begin{lemma}\label{lem:5.1}
If the assumptions of Theorem {\upshape\ref{the:2.2}} hold, then \\
{\upshape (i)} there exists a constant $M > 0$, such that $\widehat{\Phi}(u) < M$, $\forall u \in E_2${\upshape ;}\\
{\upshape (ii)} For $u \in E_2$, there exists a constant $R_0 > 0$, such that $\widehat{\Phi}(u) \leq 0$ for $\|u\|_E \geq R_0$.
\end{lemma}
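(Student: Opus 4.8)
The plan is to obtain both statements from one uniform upper bound on $\Phi(u+v)$ over $u\in E_2$, $v\in E_1$, fed into the min-max characterization \eqref{eqa:3.9}. The crucial observation is that since $\widehat{\Phi}(u)=\max_{v\in E_1}\min_{w\in E_3}\Phi(u+v+w)$, choosing $w=0$ in the inner minimization gives, for every $u\in E_2$,
\[
\widehat{\Phi}(u)\le \sup_{v\in E_1}\Phi(u+v),
\]
so it suffices to control $\Phi$ from above on $E_1\oplus E_2$.

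First I would estimate $\Phi(u+v)$ for $u\in E_2$, $v\in E_1$. Writing $\langle(L-\mu)(u+v),u+v\rangle=\langle(L-\mu-\beta)(u+v),u+v\rangle+\beta\|u+v\|^2_{L^2(\Omega,\rho)}$ and invoking \eqref{eqa:3.3} of Lemma \ref{lem:3.2} (legitimate since $u+v\in E_1\oplus E_2$) bounds the quadratic part by $-\tfrac{\gamma_1}{2}\|u+v\|_E^2+\tfrac{\beta}{2}\|u+v\|^2_{L^2(\Omega,\rho)}$. For the nonlinear part I would use the asymptotic linearity \eqref{eqa:1.3}: for any $\varepsilon>0$ there is $C_\varepsilon$ with $|f(t,x,s)-\beta s|\le\varepsilon|s|+C_\varepsilon$, and integrating in $s$ produces the global lower bound $F(t,r,s)\ge\tfrac{\beta-\varepsilon}{2}s^2-C_\varepsilon|s|$. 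Substituting, the $\tfrac{\beta}{2}\|\cdot\|^2_{L^2}$ terms cancel and only $\tfrac{\varepsilon}{2}\|\cdot\|^2_{L^2}$ remains; applying the embeddings \eqref{eqa:2.6} and \eqref{eqa:2.7} and then fixing $\varepsilon$ so small that $\varepsilon/(2\delta)\le\gamma_1/4$ collapses everything to
\[
\Phi(u+v)\le -\tfrac{\gamma_1}{4}\|u+v\|_E^2+C\|u+v\|_E,
\]
with $C$ independent of $u$ and $v$.

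Both conclusions then read off this downward parabola. For assertion (i), the right-hand side, viewed as a function of the single scalar $\|u+v\|_E\ge 0$, attains a finite maximum $C^2/\gamma_1$, so $\widehat{\Phi}(u)\le \sup_{v\in E_1}\Phi(u+v)\le C^2/\gamma_1<M$ for any $M$ slightly larger, uniformly in $u\in E_2$. For assertion (ii), I would use the orthogonality of $E_1$ and $E_2$ in $E$, giving $\|u+v\|_E^2=\|u\|_E^2+\|v\|_E^2\ge\|u\|_E^2$; setting $\sigma=\|u+v\|_E\ge\|u\|_E$, the bound becomes $-\tfrac{\gamma_1}{4}\sigma^2+C\sigma$, which is nonpositive for all $\sigma\ge\|u\|_E$ once $\|u\|_E\ge R_0:=4C/\gamma_1$. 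Hence $\sup_{v\in E_1}\Phi(u+v)\le 0$ and $\widehat{\Phi}(u)\le 0$ whenever $\|u\|_E\ge R_0$.

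The one step requiring genuine care, rather than bookkeeping, is converting the two one-sided conditions in (A1) into the single quadratic lower bound $F(t,r,s)\ge\tfrac{\beta-\varepsilon}{2}s^2-C_\varepsilon|s|$ valid for all real $s$ and uniform in $(t,x)$ (the sign analysis for $s<0$ must be checked), and then verifying that the $\tfrac{\beta}{2}$ coefficients cancel exactly so that the surviving leading term is the negative-definite $-\tfrac{\gamma_1}{2}\|\cdot\|_E^2$. Everything else—the application of Lemma \ref{lem:3.2}, the embeddings, and the elementary parabola estimates—is routine, and the positivity $\gamma_1>0$ (which rests on $\beta^-<\beta$, already established) is precisely what makes the argument close.
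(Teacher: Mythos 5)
Your proposal is correct and follows essentially the same route as the paper: reduce to bounding $\Phi(u+v)$ over $E_1\oplus E_2$ via \eqref{eqa:3.9} with $w=0$, shift by $\beta$ to invoke \eqref{eqa:3.3}, control $F-\tfrac{\beta}{2}s^2$ via the asymptotic linearity \eqref{eqa:1.3}, and apply the embeddings \eqref{eqa:2.6}--\eqref{eqa:2.7} to reach the same parabola bound $-\tfrac{\gamma_1}{4}\|u+v\|_E^2+C\|u+v\|_E$ as the paper's \eqref{eqa:5.2}. The only (cosmetic) deviation is in assertion (ii), where you exploit $\|u+v\|_E\geq\|u\|_E$ directly and note the parabola is nonpositive past its root, whereas the paper splits the bound into separate $\|u\|_E$ and $\|v\|_E$ parabolas and absorbs the latter into a constant $C_0$; both close the argument.
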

\begin{proof}
By \eqref{eqa:3.9}, for $u \in E_2$, we have
$$\widehat{\Phi}(u) = \min_{w \in E_3} \max_{v\in E_1} \Phi(u + v+w) \leq \max_{v\in E_1}  \Phi(u + v).$$
The above equality shows that, for the assertion (i), it suffices to prove that $\Phi(u + v)$ has upper bound.

In virtue of \eqref{eqa:2.8}, for any $u\in E_2$, $v\in E_1$, we have
\begin{equation}
\label{eqa:5.1R}
\Phi(u +v)  = \frac{1}{2}\langle(L-\mu - \beta)(u +v), u +v\rangle - \iint_\Omega \left(F(t,r, u +v) - \frac{\beta}{2}(u +v)^2\right)r^{n-1}\textrm{d}t \textrm{d}r.
\end{equation}
By \eqref{eqa:4.2}, it is known that
\begin{eqnarray}
|F(t,r, u +v) - \frac{\beta}{2}(u +v)^2|
\leq   \varepsilon |u +v|^2 + C|u +v|.
\label{eqa:5.1}
\end{eqnarray}
By combining \eqref{eqa:3.3} and \eqref{eqa:5.1},  from \eqref{eqa:5.1R}, it yields
\begin{eqnarray*}
\Phi(u +v) &\leq & -\frac{\gamma_1}{2} \|u +v\|^2_E + \iint_\Omega (\varepsilon |u +v|^2 + C|u +v|) r^{n-1}\textrm{d}t \textrm{d}r \nonumber\\
&\leq & -\frac{\gamma_1}{2} \|u +v\|^2_E  + \varepsilon \|u +v\|^2_{L^2(\Omega, \rho)} + C\|u +v\|_{L^1(\Omega, \rho)},
\end{eqnarray*}
for some constant $C$ depending on $\varepsilon$.

In virtue of \eqref{eqa:2.6}, \eqref{eqa:2.7} and taking $\varepsilon = \frac{\delta \gamma_1}{4}$, the above inequality can be translated into
\begin{eqnarray}
\Phi(u +v) &\leq &  -\frac{\gamma_1}{4} \|u +v\|^2_E + C\|u +v\|_E.
\label{eqa:5.2}
\end{eqnarray}
Therefore, \eqref{eqa:5.2} implies that there exists $M > 0$ such that $\Phi(u +v) \leq M$ for any $u\in E_2$ and $v\in E_1$. The assertion (i)  is established.

In what follows, we prove the assertion (ii). For $u\in E_2$,  $v\in E_1$, the fact $\|u +v\|^2_E = \|u \|^2_E + \|v\|^2_E$ and $\|u +v\|_E \leq \|u \|_E + \|v\|_E$, with the help of \eqref{eqa:5.2}, shows
\begin{eqnarray*}
\Phi(u +v) &\leq & -\frac{\gamma_1}{4} \|u \|^2_E + C \|u \|_E + (-\frac{\gamma_1}{4} \|v \|^2_E + C \|v \|_E)\\
&\leq &  -\frac{\gamma_1}{4} \|u \|^2_E + C \|u \|_E  + C_0,
\end{eqnarray*}
where $C_0 = \max\limits_{s \geq 0} \{-\frac{\gamma_1}{4} s^2 + C s \}$.
Thus, there exists a constant $R_0 > 0$ such that $\Phi(u +v) \leq 0$ for $\|u\|_E \geq R_0$. We arrive at the assertion (ii).
\end{proof}

With the help of the following lemma, we can obtain another critical point on some open ball in $E_2$.
\begin{lemma}\label{lem:5.2}
If the assumptions of Theorem {\upshape\ref{the:2.2}} hold, then for any $\widetilde{R} > 0$ there exists a constant $b$ depending on $\widetilde{R}$ such that $\widehat{\Phi}(u) \geq b$, $\forall  u \in B_{\widetilde{R}} = \{u \in E_2 : \|u \|_E < \widetilde{R}\}$.
\end{lemma}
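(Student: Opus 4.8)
The plan is to obtain the lower bound by sacrificing the $E_1$--direction in the min--max characterization \eqref{eqa:3.9}. Since taking $v=0$ in the outer maximum gives
\[
\widehat{\Phi}(u) = \max_{v\in E_1}\min_{w\in E_3}\Phi(u+v+w) \geq \min_{w\in E_3}\Phi(u+w),
\]
it suffices to bound $\Phi(u+w)$ from below by a constant that is uniform over all $w\in E_3$ and all $u$ in the bounded ball $B_{\widetilde R}$. This reduces the problem to a one-sided coercivity estimate for $\Phi$ restricted to $E_2\oplus E_3$.

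First I would rewrite $\Phi(u+w)$ in the $\beta$--shifted form used in \eqref{eqa:5.1R},
\[
\Phi(u+w) = \frac12\langle(L-\mu-\beta)(u+w),u+w\rangle - \iint_\Omega\Big(F(t,r,u+w)-\frac{\beta}{2}(u+w)^2\Big)r^{n-1}\,\textrm{d}t\,\textrm{d}r.
\]
Because $u\in E_2$ and $w\in E_3$ are supported on disjoint sets of eigenmodes, the quadratic form splits as $\langle(L-\mu-\beta)u,u\rangle+\langle(L-\mu-\beta)w,w\rangle$. On $E_3$ the coercivity estimate \eqref{eqa:3.4} gives $\langle(L-\mu-\beta)w,w\rangle\geq\gamma_2\|w\|_E^2$, while on the \emph{finite-dimensional} space $E_2$ the quantity $\langle(L-\mu-\beta)u,u\rangle$ is bounded below by $-C_2\|u\|_E^2\geq -C_2\widetilde R^2$ for $u\in B_{\widetilde R}$.

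Next I would control the nonlinear integral by the growth estimate \eqref{eqa:5.1}, namely $|F-\tfrac{\beta}{2}(u+w)^2|\leq\varepsilon|u+w|^2+C|u+w|$, and pass to $E$--norms via the embeddings \eqref{eqa:2.6} and \eqref{eqa:2.7} together with the orthogonal splitting $\|u+w\|_E^2=\|u\|_E^2+\|w\|_E^2$. Choosing $\varepsilon$ small enough that $\varepsilon/\delta\leq\gamma_2/4$ absorbs the quadratic error into the coercive term and leaves
\[
\Phi(u+w) \geq \frac{\gamma_2}{4}\|w\|_E^2 - C\|w\|_E - C(\widetilde R),
\]
where $C(\widetilde R)$ collects all the bounded $u$--dependent contributions. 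The right-hand side is a quadratic in $\|w\|_E$ with positive leading coefficient, hence bounded below by some $b=b(\widetilde R)$ independent of $w$; minimizing over $w\in E_3$ and invoking the displayed min--max inequality yields $\widehat{\Phi}(u)\geq b$ on $B_{\widetilde R}$.

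The only delicate point is that this lower bound must hold \emph{uniformly} over the infinite-dimensional space $E_3$, and this is precisely where the coercivity \eqref{eqa:3.4} is essential: the strict spectral gap $\mu_0=\beta^+-\mu>\beta$ forces $\gamma_2>0$, which lets the quadratic growth of the leading term dominate the at-most-quadratic nonlinear perturbation once $\varepsilon$ is taken small. The boundedness of the $E_2$--contribution, by contrast, is immediate from $\dim E_2<\infty$ and $\|u\|_E<\widetilde R$, so the remaining manipulations are routine applications of the embeddings already established.
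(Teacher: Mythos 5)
Your proposal is correct and follows essentially the same route as the paper's proof: both drop the $E_1$ direction by taking $v=0$ in the max--min characterization \eqref{eqa:3.9}, split the $\beta$-shifted quadratic form over the orthogonal pieces $E_2$ and $E_3$, apply the coercivity \eqref{eqa:3.4} on $E_3$ and a bound of the form $-C\|u\|_E^2$ on $E_2$, control the nonlinear remainder via \eqref{eqa:5.1} with the embeddings \eqref{eqa:2.6}--\eqref{eqa:2.7}, and conclude from the positive-leading-coefficient quadratic in $\|w\|_E$. The only cosmetic difference is that you invoke $\dim E_2<\infty$ for the $E_2$ bound, while the paper gets it directly from \eqref{eqa:3.2} and \eqref{eqa:2.6}; both are valid.
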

\begin{proof}
Recalling \eqref{eqa:3.9}, for $u \in E_2$ we have
$$\widehat{\Phi}(u) = \max_{v\in E_1} \min_{w \in E_3} \Phi(u + v+w) \geq \min_{w \in E_3}  \Phi(u + w),$$
where
\begin{eqnarray}
\Phi(u +w)  &=& \frac{1}{2}\langle(L-\mu - \beta)(u +w), u +w\rangle \nonumber
\\ &-& \iint_\Omega \left(F(t,r, u +w) - \frac{\beta}{2}(u +w)^2\right)r^{n-1}\textrm{d}t \textrm{d}r.
\label{eqa:5.3}
\end{eqnarray}

Since $E_2$ and $E_3$ are orthogonal subspaces of $E$, one follows
\begin{equation*}
\langle(L-\mu - \beta)(u +w), u +w\rangle = \langle(L-\mu - \beta)u, u \rangle + \langle(L-\mu - \beta)w, w\rangle.
\end{equation*}
By \eqref{eqa:2.6} and \eqref{eqa:3.2}, we have
\begin{eqnarray*}
&&|\langle(L-\mu - \beta)u, u \rangle| \nonumber\\
&= & | \langle (L-\mu )u, u \rangle - \beta \langle u, u \rangle |
\leq  \|u \|^2_E + \beta  \|u \|^2_{L^2(\Omega, \rho)}
  \leq  (1 + \frac{\beta}{\delta}) \|u \|^2_E = C_1 \|u \|^2_E,
\end{eqnarray*}
where $C_1 = 1 + \frac{\beta}{\delta}$.
Moreover, by \eqref{eqa:3.4}, we have $\langle(L-\mu - \beta)w, w\rangle \geq \gamma_2 \|w \|^2_E$. Therefore, we obtain
\begin{eqnarray}
\langle(L-\mu - \beta)(u +w), u +w\rangle \geq \gamma_2 \|w \|^2_E - C_1 \|u \|^2_E.
\label{eqa:5.4}
\end{eqnarray}

By a similar calculation in \eqref{eqa:5.1}, we also have
\begin{eqnarray}
|F(t,r, u +w) - \frac{\beta}{2}(u +w)^2|\leq \varepsilon |u +w|^2 + C|u +w|,
\label{eqa:5.5}
\end{eqnarray}
for some constant $C$ depending on $\varepsilon$.

Substituting \eqref{eqa:5.4}, \eqref{eqa:5.5} into \eqref{eqa:5.3} and taking $\varepsilon = \frac{\delta \gamma_2}{4}$, by a direct calculation we have
\begin{eqnarray}
\Phi(u +w) &\geq & \frac{\gamma_2}{2} \|w \|^2_E - \frac{C_1}{2} \|u \|^2_E -\frac{\delta \gamma_2}{4} \|u +w\|^2_{L^2(\Omega, \rho)} - C\|u +w\|_{L^1(\Omega, \rho)} \nonumber\\
&\geq & \frac{\gamma_2}{2} \|w \|^2_E - \frac{C_1}{2} \|u \|^2_E - \frac{\gamma_2}{4}(\|u \|^2_E + \|w \|^2_E)- C(\|u \|_E + \|w \|_E)\nonumber\\
&\geq & -(\frac{C_1}{2} + \frac{\gamma_2}{4}) \|u \|^2_E - C \|u \|_E + C_2,
\label{eqa:5.6}
\end{eqnarray}
where $C_2 = \min\limits_{s \geq 0} \{\frac{\gamma_2}{4} s^2 - C s\}$.

For any $\widetilde{R}>0$, let $b = -(\frac{C_1}{2} + \frac{\gamma_2}{4}) \widetilde{R}^2 - C \widetilde{R} + C_2$, then $\widehat{\Phi}(u) \geq b$ for $\|u \|_E < \widetilde{R}$. We complete the proof.
\end{proof}

The following lemma, combined with the assertion (ii) in Lemma \ref{lem:5.1}, helps us to acquire one critical point of mountain pass type on $E_2$ which is different from previous two.
\begin{lemma}\label{lem:5.3}
If the assumptions of Theorem {\upshape\ref{the:2.2}} hold, then there exists a constant $\tau > 0$ and $\bar{r} > 0$ such that $\widehat{\Phi}(u) \geq \tau$, for any $u \in E_2$ with $\|u\|_E = \bar{r}$.
\end{lemma}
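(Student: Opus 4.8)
The plan is to establish the mountain-pass geometry of $\widehat{\Phi}$ near the origin, where the near-zero smallness of $f$ from \eqref{eqa:1.2} dominates. Since $\widehat{\Phi}(u)=\max_{v\in E_1}\min_{w\in E_3}\Phi(u+v+w)$, choosing $v=0$ in the outer maximization gives the lower bound
\[
\widehat{\Phi}(u)\ge \min_{w\in E_3}\Phi(u+w),\qquad \forall u\in E_2,
\]
so it suffices to bound $\min_{w\in E_3}\Phi(u+w)$ below by a positive constant on a small sphere $\|u\|_E=\bar r$.

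First I would record that $w\mapsto\Phi(u+w)$ is uniformly convex on $E_3$. Indeed, for $\phi\in E_3$, formula \eqref{eqa:2.10}, the identity \eqref{eqa:3.2}, the monotonicity $\partial f/\partial u\ge 0$ and the bound $\partial f/\partial u\le\mu_0-\eta$ from (A2), together with $\|\phi\|_{L^2(\Omega,\rho)}^2\le\mu_0^{-1}\|\phi\|_E^2$ (a consequence of \eqref{eqa:2.4}), yield $\langle\Phi''(\cdot)\phi,\phi\rangle\ge(\eta/\mu_0)\|\phi\|_E^2$ --- this is precisely the computation already performed in \eqref{eqa:3.7}--\eqref{eqa:3.8}. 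A Taylor expansion in the $E_3$ direction then gives, for every $u\in E_2$ and $w\in E_3$,
\[
\Phi(u+w)\ge \Phi(u)+\langle\Phi'(u),w\rangle+\frac{\eta}{2\mu_0}\|w\|_E^2\ge \Phi(u)-\|P_3\Phi'(u)\|_E\,\|w\|_E+\frac{\eta}{2\mu_0}\|w\|_E^2,
\]
where $P_3$ is the orthogonal projection of $E$ onto $E_3$. Minimizing the quadratic lower bound in $\|w\|_E\ge 0$ produces
\[
\min_{w\in E_3}\Phi(u+w)\ge \Phi(u)-\frac{\mu_0}{2\eta}\,\|P_3\Phi'(u)\|_E^2 .
\]

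It remains to estimate both terms near $u=0$, and here the decisive fact is that $\dim E_2<\infty$ (Lemma \ref{lem:2.1}), so all norms on $E_2$ are equivalent and in particular $\|u\|_{L^\infty(\Omega)}\le C_\infty\|u\|_E$. Given $\varepsilon>0$, \eqref{eqa:1.2} supplies $\rho_\varepsilon>0$ with $|f(t,r,s)|\le\varepsilon|s|$ and $|F(t,r,s)|\le\tfrac{\varepsilon}{2}s^2$ for $|s|\le\rho_\varepsilon$. Taking $\bar r\le\rho_\varepsilon/C_\infty$ forces $|u|\le\rho_\varepsilon$ pointwise whenever $\|u\|_E=\bar r$, so on this sphere the near-zero estimates apply directly to $u$ itself (rather than to the possibly large $u+w$). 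Using \eqref{eqa:2.6} this yields
\[
\Phi(u)=\tfrac12\|u\|_E^2-\iint_\Omega F(t,r,u)\,r^{n-1}\textrm{d}t\,\textrm{d}r\ge \tfrac12\Big(1-\tfrac{\varepsilon}{\delta}\Big)\|u\|_E^2,
\]
while, since $u\in E_2$ is $L$-orthogonal to $E_3$, for $\phi\in E_3$ one has $\langle\Phi'(u),\phi\rangle=-\iint_\Omega f(t,r,u)\phi\,r^{n-1}\textrm{d}t\,\textrm{d}r$, and Cauchy--Schwarz with \eqref{eqa:2.6} gives $\|P_3\Phi'(u)\|_E\le\delta^{-1}\varepsilon\|u\|_E$.

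Combining the last three displays, on $\|u\|_E=\bar r$ I obtain
\[
\widehat{\Phi}(u)\ge\Big[\tfrac12\Big(1-\tfrac{\varepsilon}{\delta}\Big)-\frac{\mu_0\varepsilon^2}{2\eta\delta^2}\Big]\bar r^2 .
\]
Fixing $\varepsilon>0$ small enough that the bracket is positive (say $\ge\tfrac14$) and then $\bar r\le\rho_\varepsilon/C_\infty$, the conclusion follows with $\tau=\tfrac14\bar r^2>0$. The main obstacle is exactly that the inner minimization ranges over \emph{all} of $E_3$, so the smallness of $u$ does not make $u+w$ small; the two structural ingredients that overcome this are the uniform convexity of $\Phi$ in the $E_3$ direction (from (A2) and \eqref{eqa:2.4}), which confines the relevant $w$ through the penalty $\tfrac{\mu_0}{2\eta}\|P_3\Phi'(u)\|_E^2$, and the finite-dimensionality of $E_2$, which upgrades $E$-smallness of $u$ to $L^\infty$-smallness so that \eqref{eqa:1.2} becomes usable.
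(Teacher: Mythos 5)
Your proof is correct, and although it starts exactly as the paper does --- with the bound $\widehat{\Phi}(u)\ge\min_{w\in E_3}\Phi(u+w)$ and with (A2) controlling the second-order behavior in the $E_3$ direction --- the execution is genuinely different in two respects. For the $w$-dependence, the paper works pointwise: it expands $F(u+w)$ via the integral identity \eqref{eqa:5.9}, bounds the quadratic term by $\frac{\mu_0-\eta}{2\mu_0}\|w\|_E^2$ using (A2) and \eqref{eqa:2.4}, and absorbs the cross term $f(u)w$ by Cauchy--Schwarz and Young's inequality; you instead invoke uniform convexity of $\Phi$ along $E_3$ and minimize the resulting quadratic explicitly, compressing the whole inner minimization into the single penalty $-\frac{\mu_0}{2\eta}\|P_3\Phi'(u)\|_E^2$. (Your convexity step can even bypass $\Phi''$ entirely: apply Lemma \ref{lem:3.3} with $u\in E_2$, $v=sw$, $w=0$ and integrate in $s$, which yields the same inequality with $\gamma$ in place of $\eta/\mu_0$ --- an immaterial change of constant.) For the $u$-dependence, the paper uses the global polynomial bound \eqref{eqa:5.12}, which requires the asymptotic hypothesis \eqref{eqa:1.3} in addition to \eqref{eqa:1.2}, together with the $L^{2p}$ and $L^{p+1}$ norm equivalences on the finite-dimensional $E_2$ and a closing calculus argument on $\phi(s)=\frac14 s^2-Cs^{p+1}-Cs^{2p}$; you use finite-dimensionality through the $L^\infty$-norm equivalence instead and shrink $\bar r$ so that $|u|\le\rho_\varepsilon$ pointwise on the sphere, so only \eqref{eqa:1.2} is ever needed. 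Your route buys economy: no auxiliary exponent $p$, no use of \eqref{eqa:1.3}, and an explicit radius $\bar r\le\rho_\varepsilon/C_\infty$; the paper's estimates, being global in $u\in E_2$, give a lower bound on every sphere where $\phi$ is positive, which your localized argument does not provide but the lemma does not require. Two small points you should make explicit: the $L^\infty$-norm is finite (hence a genuine norm) on $E_2$ because the finitely many eigenfunctions $\psi_{jk}$ spanning it are continuous on $\Omega$ (the factor $x^{-\nu}J_\nu(x)$ is entire, so $r^{-\nu}J_\nu(\gamma_j r/R)$ is bounded); and $\|P_3\Phi'(u)\|_E$ should be read as the norm of the functional $\Phi'(u)$ restricted to $E_3$ (equivalently, of the Riesz representative with respect to $\langle\cdot,\cdot\rangle_0$ projected onto $E_3$), which is precisely what your Cauchy--Schwarz estimate bounds by $\varepsilon\delta^{-1}\|u\|_E$.
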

\begin{proof}
For any $u \in E_2$, $w \in E_3$,
\begin{eqnarray}
\Phi(u +w)  = \frac{1}{2}\langle(L-\mu )(u +w), u +w\rangle - \iint_\Omega F(t,r, u +w) r^{n-1}\textrm{d}t \textrm{d}r.
\label{eqa:5.7}
\end{eqnarray}
By \eqref{eqa:3.2} and $E_1$, $E_2$ are orthogonal subspaces of $E$, we have
\begin{eqnarray}
\langle(L-\mu )(u +w), u +w\rangle = \|u \|^2_E + \|w \|^2_E.
\label{eqa:5.8}
\end{eqnarray}

Denote $f(\xi) = f(t,r, \xi)$ and $F(\xi) = F(t,r, \xi)$ for convenience. It is easy to see
\begin{eqnarray*}
&&\int^1_0 \int^1_0 s \frac{\partial f}{\partial \xi} (u + s\theta w) w^2 \textrm{d} \theta \textrm{d} s = \int^1_0 w f(u + s w)\textrm{d}s  - f(u)w \\
&=& \int^{u+w}_0 f(s) \textrm{d} s - \int^{u}_0 f(s) \textrm{d} s - f(u)w = F(u + w) - F(u ) -f(u)w.
\end{eqnarray*}
Thus we obtain
\begin{eqnarray}
F(u + w) = \int^1_0 \int^1_0 s \frac{\partial f}{\partial \xi} (u + s\theta w) w^2 \textrm{d} \theta \textrm{d}s + f(u)w + F(u ).
\label{eqa:5.9}
\end{eqnarray}

In what follows, we estimate $\iint_\Omega F(u + w) r^{n-1}\textrm{d}t \textrm{d}r$ by dividing it into three terms:
 $\iint_\Omega  [\int^1_0 \int^1_0 s \frac{\partial f}{\partial \xi} (u + s\theta w) w^2 \textrm{d} \theta \textrm{d}s] r^{n-1}\textrm{d}t \textrm{d}r$, $\iint_\Omega f(u)w r^{n-1}\textrm{d}t \textrm{d}r$ and $\iint_\Omega F(u) r^{n-1}\textrm{d}t \textrm{d}r$.

Firstly, by the assumption (A2), we have
$$\int^1_0 \int^1_0 s \frac{\partial f}{\partial \xi} (u + s\theta w) w^2 \textrm{d} \theta \textrm{d} s \leq \frac{1}{2} w^2 (\mu_0 - \eta).$$
Thus, we obtain
\begin{eqnarray}
\iint_\Omega  \left(\int^1_0 \int^1_0 s \frac{\partial f}{\partial \xi} (u + s\theta w) w^2 \textrm{d} \theta \textrm{d}s \right) r^{n-1}\textrm{d}t \textrm{d}r \leq \frac{\mu_0 - \eta}{2} \|w \|^2_{L^2(\Omega, \rho)}.
\label{eqa:5.10}
\end{eqnarray}
Recalling $w \in E_3$, with the help of \eqref{eqa:2.4}, we obtain $\|w \|^2_{L^2(\Omega, \rho)} \leq \frac{1}{\mu_0} \|w \|^2_E$.
Therefore,
\begin{eqnarray}
\iint_\Omega  \left(\int^1_0 \int^1_0 s \frac{\partial f}{\partial \xi} (u + s\theta w) w^2 \textrm{d} \theta \textrm{d}s \right) r^{n-1}\textrm{d}t \textrm{d}r  \leq \frac{\mu_0 - \eta}{2 \mu_0} \|w \|^2_E.
\label{eqa:5.11}
\end{eqnarray}

Secondly, fix $p > 1$, the assumptions \eqref{eqa:1.2} and \eqref{eqa:1.3} imply that for $\varepsilon >0$ small enough which will be chosen later, then there exists a constant $C=C(\varepsilon) >0$, such that
\begin{eqnarray}
|f(u)| \leq \varepsilon |u| + C |u|^p, \quad \forall \ (t, r, u) \in \Omega \times \mathbb{R}.
\label{eqa:5.12}
\end{eqnarray}
Thus, a direct calculation yields
\begin{eqnarray*}
\Big| \iint_\Omega f(u)w r^{n-1}\textrm{d}t \textrm{d}r \Big|
&\leq& \varepsilon \|u \|_{L^2(\Omega, \rho)} \|w \|_{L^2(\Omega, \rho)} + C \|u \|^p_{L^{2p}(\Omega, \rho)}\|w \|_{L^2(\Omega, \rho)} \nonumber\\
&\leq& \frac{\varepsilon}{2} \|u \|^2_{L^2(\Omega, \rho)}  + C \|u \|^{2p}_{L^{2p}(\Omega, \rho)} + \varepsilon\|w \|^2_{L^2(\Omega, \rho)},
\end{eqnarray*}
where the last inequality is acquired by the Cauchy's inequality with $\varepsilon$.

On the other hand, since $\dim(E_2) < \infty$, then there exists a constant $C>0$ such that $\|u \|^{2p}_{L^{2p}(\Omega, \rho)} \leq C \|u \|^{2p}_E$. Therefore,
\begin{eqnarray}
\Big| \iint_\Omega f(u)w r^{n-1}\textrm{d}t \textrm{d}r \Big| \leq \frac{\varepsilon}{2\delta} \|u \|^2_E  + C \|u \|^{2p}_E + \frac{\varepsilon}{\mu_0}\|w \|^2_E,
\label{eqa:5.13}
\end{eqnarray}
for some constant $C$ depending on $\varepsilon$ and $p$.

Thirdly, by \eqref{eqa:5.12}, we have $|F(u)| \leq \frac{\varepsilon}{2} |u|^2 + C |u|^{p+1}$. Since $\dim(E_2) < \infty$, similarly, there exists a constant $C>0$ such that $\|u \|^{p+1}_{L^{p+1}(\Omega, \rho)} \leq C \|u \|^{p+1}_E$. Therefore,
\begin{eqnarray}
\Big|\iint_\Omega F(u) r^{n-1}\textrm{d}t \textrm{d}r \Big| &\leq & \frac{\varepsilon}{2} \|u \|^2_{L^2(\Omega, \rho)} + C \|u \|^{p+1}_{L^{p+1}(\Omega, \rho)} \nonumber \\
&\leq & \frac{\varepsilon}{2\delta} \|u \|^2_E + C \|u \|^{p+1}_E,
\label{eqa:5.14}
\end{eqnarray}
for some constant $C$ depending on $\varepsilon$ and $p$.

Consequently, the sum of \eqref{eqa:5.11}, \eqref{eqa:5.13} and \eqref{eqa:5.14} yields
\begin{eqnarray}\label{eqa:5.15}
\Big|\iint_\Omega F(u + w) r^{n-1}\textrm{d}t \textrm{d}r \Big| \leq \frac{\varepsilon}{\delta} \|u \|^2_E + C \|u \|^{p+1}_E + C \|u \|^{2p}_E \nonumber \\
+ \frac{\mu_0 - \eta + 2\varepsilon}{2 \mu_0} \|w \|^2_E.
\end{eqnarray}

Finally, substituting \eqref{eqa:5.8} and \eqref{eqa:5.15} into \eqref{eqa:5.7}, one follows
\begin{eqnarray}\label{eqa:5.16}
\Phi(u +w) &\geq& (\frac{1}{2} - \frac{\varepsilon}{\delta}) \|u \|^2_E - C \|u \|^{p+1}_E - C \|u \|^{2p}_E + (\frac{1}{2} - \frac{\mu_0 - \eta + 2\varepsilon}{2 \mu_0}) \|w \|^2_E \nonumber\\
&=& (\frac{1}{2} - \frac{\varepsilon}{\delta}) \|u \|^2_E  - C \|u \|^{p+1}_E - C \|u \|^{2p}_E + ( \frac{ \eta - 2\varepsilon}{2 \mu_0}) \|w \|^2_E.
\end{eqnarray}
Taking $\varepsilon = \min\{\frac{\delta}{4}, \frac{\eta}{4}\}$ in \eqref{eqa:5.16}, we have
\begin{eqnarray*}
\Phi(u +w) \geq  \frac{1}{4}\|u \|^2_E - C \|u \|^{p+1}_E - C \|u \|^{2p}_E.
\end{eqnarray*}

Now we consider the function
\begin{eqnarray*}
\phi (s) = \frac{1}{4}s^2 - C s^{p+1} - C s^{2p}, \ \forall \ s \geq 0.
\end{eqnarray*}
Since  $p > 1$, it is easy to see that $\phi$ attains local minimum at $s = 0$. Therefore there exist  two constants $\bar{r}>0$ and $\tau > 0$ such that $\phi (\bar{r}) \geq \tau$.
Recalling  $\widehat{\Phi}(u)  \geq \min\limits_{w \in E_3}  \Phi(u + w)$, which leads to
$\widehat{\Phi}(u)  \geq \tau $, for $ u \in E_2$  satisfying $\|u \|_E = \bar{r}$.
\end{proof}

\section{Proof of Theorem \ref{the:2.2}}

\setcounter{equation}{0}
\label{sec:6}

In this section, we shall give the proof  of Theorem {\upshape\ref{the:2.2}}.
\begin{proof}
Firstly, we assert that there exist a local minimum point and a global maximum point in $E_2$.

On the one hand, the assumption \eqref{eqa:1.2} and (A2) imply that $f(t, r, 0)=0$ and  $f(t, r, \xi)\geq 0$  for $\xi \geq 0$. Therefore,  we have $F(t, r, u)\geq 0$ for any $u \in E$.  For $v \in E_1$, by \eqref{eqa:3.1}, we obtain
$$\Phi(v)  = \frac{1}{2}\langle(L-\mu)v, v\rangle - \iint_\Omega F(t,r, v) r^{n-1}\textrm{d}t \textrm{d}r\leq 0.$$
Therefore,
\begin{equation}\label{eqa:6.1R}
\widehat{\Phi}(0) = \min_{w \in E_3} \max_{v\in E_1} \Phi(v+w) \leq \max_{v\in E_1}  \Phi(v) \leq 0.
\end{equation}

By Lemma \ref{lem:5.2}, Lemma \ref{lem:5.3}, and $\widehat{\Phi}(0) \leq 0$, taking $\widetilde{R} = \bar{r}$ and noting $0 \in B_{\bar{r}} = \{u \in E_2 : \|u \|_E < \bar{r}\}$, then the reduction functional  $\widehat{\Phi}$ attains its infimum in  $B_{\bar{r}}$. Let $\sigma_1 = \inf\limits_{u \in B_{\bar{r}}}\widehat{\Phi}(u)$, by Lemma \ref{lem:3.1} and  Lemma \ref{lem:4.4}, we have  that $\widehat{\Phi}$ satisfies the $(PS)_{\sigma_1}$ condition. Therefore, there exists $u_1 \in  E_2$ such that $\widehat{\Phi}' (u_1)= 0$ and $\widehat{\Phi} (u_1)= \sigma_1$.

On the other hand, let $\sigma_2 = \sup\limits_{u \in E_2}\widehat{\Phi}(u)$, by Lemma \ref{lem:4.4} and Lemma \ref{lem:5.1}, one follows that $\widehat{\Phi} $ satisfies the $(PS)_{\sigma_2}$ condition and it has upper bound respectively. Similarly, there exists $u_2 \in  E_2$ such that $\widehat{\Phi}' (u_2)= 0$ and $\widehat{\Phi} (u_2)= \sigma_2$.

Now we prove $u_1$ and $u_2$ are two different points in $E_2$.  By \eqref{eqa:6.1R}, Lemma \ref{lem:5.3} and $0 \in B_{\bar{r}}$, we have
\begin{eqnarray}
\inf\limits_{u \in B_{\bar{r}}}\widehat{\Phi}(u) \leq \widehat{\Phi}(0) \leq \max_{v\in E_1}  \Phi(v) \leq 0 < \tau \leq \inf\limits_{\|u\|=\bar{r}}\widehat{\Phi}(u)\leq \sup\limits_{u \in E_2}\widehat{\Phi} (u),
\label{eqa:5.17}
\end{eqnarray}
thus $u_1 \neq u_2$.

In what follows, we prove that there exists a third critical point distinct from $u_1$ and $u_2$. We divide the proof into the following two cases.

\emph{Case 1}: If $\widehat{\Phi}$ has another local maximum point which is different from $u_2$, then there exist at least three critical points of $\widehat{\Phi}$.

\emph{Case 2}: If $u_2$ is the unique  maximum point of $\widehat{\Phi}$. Taking $u_0 \in E_2$ with $\|u_0\|_E = 1$, by Lemma \ref{lem:5.1}, there exists $R_0 > \bar{r}$ and $\widehat{\Phi} (R_0 u_0) \leq 0$. Moreover, one of the facts holds: either $sR_0 u_0 \neq u_2$ or $-sR_0 u_0 \neq u_2$ for all $s \in [0, 1]$.

(i) If $sR_0 u_0 \neq u_2$ for all $s \in [0, 1]$, Lemma \ref{lem:5.3} and inequality \eqref{eqa:5.17} imply that
$$\max \{\widehat{\Phi} (0), \widehat{\Phi} (R_0 u_0)\} \leq 0 < \tau \leq \inf\limits_{\|u\|= \bar{r}} \widehat{\Phi} (u).$$

Let
$$c^+ = \inf\limits_{g \in \Sigma^+} \max\limits_{s \in [0, 1]} \widehat{\Phi} (g(s)),$$
 where $\Sigma^+ = \{g \in C([0, 1], E_2) : g(0)=0, g(1)=R_0 u_0\}$. Recalling that $\widehat{\Phi}$ is $C^1$ function and satisfies $(PS)_{c^+}$ condition. By the mountain pass lemma, we obtain that  $c^+$ is a critical value of $\widehat{\Phi}$ and satisfies $c^+ \geq \tau >0$. Therefore, there exists $u_3 \in E_2$ such that $\widehat{\Phi} (u^+_3) = c^+$ and $\widehat{\Phi}' (u^+_3) = 0$.

 Furthermore, for all $s \in [0, 1]$,  since $g_0(s) = sR_0 u_0 \in \Sigma^+$ is not the  maximum point of $\widehat{\Phi}$, then
 $$c^+ \leq \max\limits_{s \in [0, 1]} \widehat{\Phi} (g_0(s)) < \sup\limits_{u\in E_2}\widehat{\Phi}(u)= \sigma_2.$$
Thus
$$\sigma_1 = \inf\limits_{u \in B_{\bar{r}}}\widehat{\Phi}(u)\leq 0< \tau \leq c^+ \leq \max\limits_{s \in [0, 1]} \widehat{\Phi} (g_0(s)) < \sup\limits_{u\in E_2}\widehat{\Phi}(u) =\sigma_2$$
implies that $u_1$, $u_2$ and $u_3$ are three different critical points.

(ii) If $-sR_0 u_0 \neq u_2$ for all $s \in [0, 1]$, similarly, we have
$$c^- = \inf\limits_{g \in \Sigma^-} \max\limits_{s \in [0, 1]} \widehat{\Phi} (g(s))$$
is the critical value of $\widehat{\Phi}$, where
 $\Sigma^- = \{g \in C([0, 1], E_2) : g(0)=0, g(1)=-R_0 u_0\}$. Therefore, there exists $u^-_3 \in E_2$ such that $\widehat{\Phi} (u^-_3) = c^-$ and $\widehat{\Phi}' (u^-_3) = 0$. Moreover, we have $\sigma_1 < c^-  < \sigma_2$.
 This implies the reduction function $\widehat{\Phi}$ has three critical points.

Therefore, the the reduction function $\widehat{\Phi}$ has at least three critical points whenever either $sR_0 u_0 \neq u_2$ or $-sR_0 u_0 \neq u_2$ holds. Consequently, by Lemma \ref{lem:3.1}, it follows that the energy function $\Phi$ has at least three critical points. We complete the proof.
\end{proof}

%\vskip 5mm

%{\bf Ethics statement.} This work did not involve any active collection of human data.

%\vskip 5mm

%{\bf Data accessibility statement.} This work does not have any experimental data.

%\vskip 5mm

%{\bf Competing interests statement.} We have no competing interests.

%\vskip 5mm

%{\bf Authors¡¯ contributions.}
%HW and SJ contributed to the mathematical proof and writing the paper. All authors gave final approval for publication.

%\vskip 5mm

%{\bf Funding.} This work was supported by NSFC Grants (nos. 11322105 and 11671071).

%% The Appendices part is started with the command \appendix;
%% appendix sections are then done as normal sections
%% \appendix

%% \section{}
%% \label{}

%% References
%%
%% Following citation commands can be used in the body text:
%% Usage of \cite is as follows:
%%   \cite{key}         ==>>  [#]
%%   \cite[chap. 2]{key} ==>> [#, chap. 2]
%%

%% References with bibTeX database:

\bibliographystyle{elsarticle-num}
\bibliography{<your-bib-database>}

%% Authors are advised to submit their bibtex database files. They are
%% requested to list a bibtex style file in the manuscript if they do
%% not want to use elsarticle-num.bst.

%% References without bibTeX database:

%\section*{References}

\end{document}